\documentclass[11pt]{amsart}
\usepackage{times,amsmath}
\usepackage{amsthm}
\usepackage{amsmath,algorithm2e,bm}
\usepackage{amsmath,amssymb,amsfonts,amscd}
\usepackage{graphicx,latexsym,color}
\usepackage{hyperref}
\usepackage{multirow}
\usepackage{floatrow}
\usepackage{wrapfig}

\usepackage{verbatim}
\usepackage{cite}
\usepackage{subcaption}
\usepackage[pagewise]{lineno}

\allowdisplaybreaks[4]

\newcommand{\p}{\partial}

\newcommand{\dif}{\,\mathrm{d}}

\newtheorem{thm}{Theorem}[section]
\newtheorem{lem}[thm]{Lemma}

\newtheorem{remark}[thm]{Remark}
\def \bes{\begin{eqnarray}}
\def \ees{\end{eqnarray}}
\def \bns{\begin{eqnarray*}}
\def \ens{\end{eqnarray*}}

\newenvironment{eqa}{\begin{equation}%
  \begin{array}{rcl}}{\end{array}\end{equation}}
\newcommand\beqa{\begin{eqa}}
\newcommand\eeqa{\end{eqa}}
\usepackage[bottom=1.2in,top=1in,left=1in,right=1in]{geometry}

\begin{document}
\numberwithin{equation}{section}

\title{A free-boundary model of vascularized tumor growth with time-periodic coefficients}

\author{Xinyue Evelyn Zhao}  
\address{Department of Mathematics, University of Tennessee, Knoxville, TN 37916, USA, corresponding author} 
\email{xzhao45@utk.edu}

\begin{abstract}
We investigate a free boundary PDE model for vascularized tumor growth with two time-periodic coefficients representing nutrient supply $\phi(t)$ and nutrient demand $\psi(t)$. Under radial symmetry, the model reduces to a nonautonomous ODE for the tumor radius. We prove a vanishing--persistence dichotomy governed by the averaged coefficients $\bar{\phi}$ and $\bar{\psi}$: if the average nutrient supply does not exceed the average nutrient demand, namely $\bar{\phi}\le \bar{\psi}$, the tumor radius tends to zero; if the average supply exceeds the average demand, namely $\bar{\phi}>\bar{\psi}$, the tumor persists and converges to a unique positive periodic solution. We also study the linear stability of this periodic solution when nonradial perturbations are imposed. Under a stronger condition on the nutrient supply and demand, we show that the radially symmetric periodic solution is linearly stable when the tumor aggressiveness parameter is sufficiently small. Numerical simulations are provided to illustrate the analytical results.
\end{abstract}

\maketitle

\section{Introduction}
Tumor growth is a complex biological process driven by the interplay between cell proliferation, nutrient supply, and interactions with the surrounding tissue, often resulting in irregular and evolving geometries. From a mathematical perspective, such evolving tumor shapes can be described by PDE models in the form of free boundary problems; see, for example, \cite{bazaliy2003global,bazaliy2003free,cui2009well,cui2007bifurcation,fontelos2003symmetry,friedman2007mathematical,friedman2006bifurcation,friedman2006asymptotic,friedman2008stability,friedman1999analysis,friedman2001symmetry, FFH1,angio2,angio1,huang2021asymptotic,he2022linear,he2022thelinear,zhao2020impact,zhao2020symmetry, cui2001analysis,hao2012bifurcation,zhao2025analysis,zhao2025determination,wu2025optimal,zhao2024optimal}, and the references therein. These models typically regard the tumor as a continuum of proliferating cells with constant density and describe the dynamics of nutrient concentration, internal pressure, and boundary motion through reaction-diffusion equations, conservation laws, and geometric evolution equations.

In this paper, we consider a vascularized tumor. Angiogenesis, the process by which new blood vessels form from existing ones, plays a crucial role in tumor development. Prior to angiogenesis, tumor cells receive nutrients primarily through diffusion from the host tissue, and the tumor size is therefore limited by the diffusion range of nutrients. Consequently, nonvascularized tumors are typically slow-growing and remain on the order of a few millimeters in size \cite{lowengrub2,spill2015mesoscopic}. As the tumor grows, the existing vasculature becomes insufficient to supply nutrients to all cells, which may trigger angiogenesis. Tumor cells then secrete signaling factors that stimulate the surrounding vasculature to grow toward the tumor. Once vascularized, the tumor develops its own capillary network, allowing nutrients to be supplied more efficiently and enabling further growth. This transition is often associated with increased malignancy and the potential for metastasis. Mathematical analysis of such models can provide insight into tumor growth mechanisms and may contribute to the development of therapeutic strategies.

Let $\hat{u}(\hat{\bm x},t)$ denote nutrient concentration. Nutrients (such as glucose) diffuse through the tumor tissue and are taken up by tumor cells. Since the rate of nutrient diffusion (e.g., for glucose, on the order of minutes) is much faster than the rate of tumor growth (on the order of days or months), the nutrient concentration can be approximated as being in quasi-steady state for a given tumor morphology. Therefore, $\hat{u}$ satisfies:
\begin{equation}\label{eqn:uhat}
0= \Delta \hat{u} + A(u_B(t)-\hat{u}) - \lambda_0 \hat{u}  \qquad \text{in }\Omega(t),
\end{equation}
where $\Omega(t)\subseteq \mathbb{R}^2$ denotes the evolving tumor region, $A(u_B(t) -\hat{u})$ represents the supply of nutrients from the vasculature, $u_B(t)$ is the nutrient concentration in the blood vessels, and $\lambda_0$ is the consumption rate by tumor cells.

In vascularized tumor tissue, nutrients are supplied through the tumor vasculature, so the flux across the tumor boundary depends on the difference between the nutrient concentration inside and outside the tumor. Accordingly, $\hat{u}$ satisfies the Robin boundary condition (see \cite{angio1,angio2}):
\begin{equation}\label{eqn:uhatbdy}
\frac{\p \hat{u}}{\p \bm{n}} + \hat{\alpha} \bigl(\hat{u} - \bar{u}(t) \bigr) = 0 \qquad \text{on }\p \Omega(t),
\end{equation}
where $\bm{n}$ is the outward normal, $\bar{u}(t)$ denotes the nutrient concentration outside the tumor, and $\hat{\alpha}>0$ is a parameter reflecting the extent of angiogenesis. In general, $\hat{\alpha}$ may depend on time; however, for analytical simplicity, we assume it is a constant.

As tumor cells proliferate and die, the assumption of constant cell density implies that cell movement generates internal pressure $\hat{p}(\hat{\bm x},t)$. Following \cite{ friedman1999analysis, cristini2003nonlinear}, we assume that the tumor tissue behaves as a porous medium. Let $\bm{V}$ denote the velocity field of tumor cell motion. Then, by Darcy's law
\[
\bm{V} = -\nabla \hat{p}.
\]
Mass conservation then yields
\begin{equation}\label{eqn:p}
    - \Delta \hat{p}  = \text{div} \bm{V} = \hat{\mu}(\hat{u} - \tilde{u}) \qquad \text{in }\Omega(t),
\end{equation}
where $\tilde{u}$ is the critical nutrient concentration below which cells cannot survive, and the net proliferation rate of the tumor is assumed to be proportional to $\hat{u}-\tilde{u}$. The parameter $\hat{\mu} >0$ represents the aggressiveness of the tumor: it measures the rate of tumor expansion due to cell proliferation when $\hat{u}>\tilde{u}$, and tumor shrinkage due to cell death when $\hat{u}<\tilde{u}$.

We then specify the boundary condition for the pressure. As in \cite{bazaliy2003global,friedman2007mathematical,friedman2006asymptotic,friedman2008stability}, the cell-to-cell adhesiveness gives rise to the boundary condition:
\begin{equation}\label{eqn:pbdy}
    \hat{p} = \kappa \qquad \text{on }\partial \Omega(t),
\end{equation}
where $\kappa$ denotes the mean curvature of the boundary. Furthermore, assuming the velocity field given by Darcy's law is continuous up to the boundary, the normal velocity of the free boundary satisfies:
\begin{equation}\label{eqn:bdy}
    \hat{V}_n =\bm{V}\cdot \bm{n} =  -\frac{\partial \hat{p}}{\partial \bm{n}} \qquad \text{on }\partial \Omega(t)
\end{equation}
where $\hat{V}_n$ denotes the normal velocity of the moving boundary.

To simplify the system, we introduce the change of variables (see \cite{angio2}):
\begin{eqnarray}
    &\displaystyle \label{change0} \lambda = A+ \lambda_0, \qquad \bm{x} = \sqrt{\lambda}\hat{\bm x},\qquad  \alpha = \frac{\hat{\alpha}}{\sqrt{\lambda}}, \qquad \mu = \frac{\hat{\mu}}{\lambda}, \qquad V_n = \frac{\hat{V}_n}{\sqrt{\lambda}},\\
    &\displaystyle \label{change1} u(\bm{x},t) = \hat{u}(\hat{\bm x},t) -\frac{u_B(t)}{1+\lambda_0/A},\qquad p(\bm{x},t) = \hat{p}(\hat{\bm x},t),\\
    &\displaystyle \label{change2} \phi(t) := {\bar u(t)} - \frac{u_B(t)}{1+\lambda_0/A},\\
    &\displaystyle \label{change3}  \psi(t) := \tilde{u}-\frac{u_B(t)}{1+\lambda_0/A}.
\end{eqnarray}
Under this transformation, the nutrient equation becomes
\begin{eqnarray}
    &\displaystyle \label{seqn1} 0 = \Delta u -   u \qquad \text{in }\Omega(t),\\
    &\displaystyle \label{seqn2} \frac{\p u}{\p \bm{n}} + \alpha (u - \phi(t)) = 0 \qquad \text{on }\p \Omega(t),
\end{eqnarray}
while the pressure equation reduces to
\begin{eqnarray}
    &\displaystyle \label{seqn3} -\Delta p = \mu(u - \psi(t)) \qquad \text{in }\Omega(t),\\
    &\displaystyle \label{seqn4} p = \kappa \qquad \text{on } \partial \Omega(t),
\end{eqnarray}
and the free boundary equation becomes
\begin{equation}
    \label{seqn5} V_n = -\frac{\p p}{\p {\bm n}} \qquad \text{on } \partial \Omega(t).
\end{equation}
At time $t=0$, we prescribe the initial tumor domain $\Omega(0)$ and the initial nutrient distribution:
\begin{equation}
    \label{seqn6}
    u(\bm{x},0) = \sigma_0(\bm{x})  \qquad \text{in } \Omega(0).
\end{equation}

It is noted that since $\tilde{u}$ represents the critical nutrient concentration, evaluating \eqref{eqn:uhat} at $\tilde{u}$, it is reasonable to assume
\[
A(u_B(t) - \tilde{u}) - \lambda_0 \tilde{u} < 0 \qquad \text{for all }t>0,
\]
which is equivalent to
\[
\tilde{u} > \frac{u_B(t)}{1+\lambda_0/A} \qquad \text{for all }t>0.
\]
In addition, we assume $\bar{u}(t) > \tilde{u}$, which implies
\[
\bar{u}(t) > \frac{u_B(t)}{1+\lambda_0/A} \qquad \text{for all }t>0.
\]
Consequently, the functions $\phi(t)$ and $\psi(t)$ defined in \eqref{change2}–\eqref{change3} are strictly positive.

Motivated by periodic food intake and the resulting cyclic variation in nutrient
concentration in both the vasculature and the surrounding normal tissue, we
assume that $u_B(t)$ and $\bar u(t)$ are positive $T$-periodic functions, where
$T$ may be interpreted biologically as a daily period. By \eqref{change2} and
\eqref{change3}, the transformed coefficients $\phi(t)$ and $\psi(t)$ are also
$T$-periodic. For convenience, we introduce the following notations
\begin{eqnarray}
    && \displaystyle \label{phi} \bar{\phi} := \frac1T \int_0^T \phi(t) \dif t, \qquad \phi^* := \max\limits_{t\in [0,T]} \phi(t), \qquad \phi_* := \min\limits_{t\in [0,T]} \phi(t),\\
    && \displaystyle \label{psi} \bar{\psi} := \frac1T \int_0^T \psi(t) \dif t, \qquad \psi^* := \max\limits_{t\in [0,T]} \psi(t), \qquad \psi_* := \min\limits_{t\in [0,T]} \psi(t).
\end{eqnarray}
These quantities play a crucial role in the analysis of the long-time dynamics.

Time-periodic nutrient supply has already been considered in free boundary tumor models. Bai and Xu \cite{bai2013qualitative}, Huang, Zhang, and Hu
\cite{huang2021asymptotic}, and He and Xing \cite{he2021existence} studied
nonvascularized free boundary tumor models with periodic external nutrient
supply. These models may be viewed as special cases of the present framework
when the threshold $\psi(t)\equiv \tilde{u}$ is constant and, formally, the Robin boundary
condition reduces to the Dirichlet boundary condition in the limit
$\alpha\to\infty$. In the radially symmetric case, Bai and Xu
\cite{bai2013qualitative} showed that if $\bar\phi<\tilde u$, then the
tumor-free state is globally stable, while if $\phi_*>\tilde u$, then there
exists a unique positive $T$-periodic solution that is stable under radial
perturbations. Huang, Zhang, and Hu \cite{huang2021asymptotic} further studied
the linear stability of this periodic solution under nonradial perturbations. He and Xing \cite{he2021existence} addressed the remaining regime
$\tilde u\in[\phi_*,\bar\phi)$ and proved that a positive $T$-periodic solution
exists whenever $\bar\phi>\tilde u$. Moreover, Huang and Hu
\cite{huang2024periodic} studied the corresponding model without the
quasi-steady-state approximation. More recently, Liu and Hu
\cite{liu2025periodic} extended the periodic nutrient framework to a
vascularized tumor model in which the angiogenesis coefficient is related to
permeability and is inversely proportional to the tumor radius.

In contrast, the present work considers a vascularized tumor model with two
time-periodic coefficients. We allow not only the nutrient input but also the
effective threshold for net proliferation to vary periodically in time. In the radially symmetric setting, we derive a vanishing--persistence
dichotomy governed by the averages $\bar\phi$ and $\bar\psi$.

\begin{thm}\label{dich}
Consider the system \eqref{seqn1} -- \eqref{seqn6} in the radially symmetric
case, where the tumor domain is a disk $\Omega(t)=B_{R(t)}$ centered at the origin.
\begin{itemize}
    \item If $\bar{\phi}\le \bar{\psi}$, then the tumor vanishes, namely
    \[
    \lim_{t\to\infty}R(t)=0.
    \]
    \item If $\bar{\phi}>\bar{\psi}$, then the tumor persists and approaches
    a positive $T$-periodic state. More precisely, there exists a unique
    positive $T$-periodic solution $R_*(t)$, and every solution with
    $R(0)=R_0>0$ satisfies
    \[
    \lim_{t\to\infty}|R(t)-R_*(t)|=0.
    \]
\end{itemize}
\end{thm}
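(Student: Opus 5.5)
Under radial symmetry I will first reduce the system to a scalar nonautonomous ODE for $R(t)$, and then run a Poincaré-map argument driven by the averages $\bar\phi$ and $\bar\psi$.

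\textbf{Step 1: reduction to an ODE.} With $\Omega(t)=B_{R(t)}\subset\mathbb{R}^2$, equation \eqref{seqn1} gives $u=c(t)I_0(r)$. The Robin condition \eqref{seqn2} then fixes
\[
c=\frac{\alpha\phi(t)}{I_1(R)+\alpha I_0(R)} .
\]
Next I integrate \eqref{seqn3} over $B_R$ and use \eqref{seqn5}. The curvature term is constant on the circle, so it does not enter the flux. Using $(rI_1)'=rI_0$, this gives
\[
R\,R'=\mu\Bigl(cRI_1(R)-\tfrac{\psi R^2}{2}\Bigr),
\]
that is,
\[
R'=\frac{\mu}{2}R\bigl(\phi(t)h(R)-\psi(t)\bigr),\qquad h(R)=\frac{2\alpha I_1(R)}{R\,(I_1(R)+\alpha I_0(R))}.
\]
Note that $\phi h(R)$ is exactly the average of $u$ over the disk. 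From the small- and large-argument asymptotics of $I_0,I_1$, we get $h(0^+)=1$ and $h(R)\to0$ as $R\to\infty$.

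\textbf{Step 2: strict monotonicity of $h$ (the main obstacle).} Write
\[
\frac{1}{h(R)}=\frac{R}{2\alpha}+\frac{R\,I_0(R)}{2I_1(R)} .
\]
It therefore suffices that $RI_0/I_1$ is strictly increasing. My first attempt will use the classical continued fraction
\[
\frac{RI_0(R)}{I_1(R)}=2+\cfrac{R^2}{4+\cfrac{R^2}{6+\cdots}},
\]
whose monotonicity in $R$ needs care, since $R^2$ appears in numerator and denominator alike. Alternatively, I will show directly that the derivative numerator $2I_0I_1-R(I_0^2-I_1^2)$ is positive, for instance through the power series or a Riccati equation for $y=I_1/I_0$, namely $y'=1-y/R-y^2$. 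This yields $h$ strictly decreasing with $0<h<1$ on $(0,\infty)$.

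\textbf{Step 3: vanishing when $\bar\phi\le\bar\psi$.} Positivity and global existence are immediate since $h$ is bounded. From $(\ln R)'=\frac{\mu}{2}(\phi h(R)-\psi)$ and $h<1$ we get
\[
\ln R(t+T)-\ln R(t)<\frac{\mu T}{2}(\bar\phi-\bar\psi)\le 0 .
\]
Hence $R(nT)$ decreases to some limit $L\ge0$. If $L>0$, continuity of the Poincaré map $P:R_0\mapsto R(T)$ makes $L$ a fixed point, and hence gives a positive periodic solution with $\int_0^T\phi h(R)\,dt=T\bar\psi$. This contradicts $\int_0^T\phi h(R)\,dt<T\bar\phi\le T\bar\psi$. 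So $L=0$, and the bounded growth rate over one period gives $R(t)\to0$.

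\textbf{Step 4: persistence when $\bar\phi>\bar\psi$.} The map $P$ is continuous and strictly increasing, by uniqueness for scalar ODEs. For small $R_0$, $h$ is close to $1$ along the orbit over one period, so the log-increment is near $\frac{\mu T}{2}(\bar\phi-\bar\psi)>0$ and $P(R_0)>R_0$. For large $R_0$, $h$ is small and the increment is near $-\frac{\mu T}{2}\bar\psi<0$, so $P(R_0)<R_0$. Hence a fixed point exists, giving a periodic solution $R_*$. For uniqueness, two periodic solutions are ordered, say $R_1<R_2$ everywhere, and both satisfy $\int_0^T\phi h(R_i)\,dt=T\bar\psi$. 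But $\int_0^T\phi\,(h(R_1)-h(R_2))\,dt>0$ since $\phi>0$ and $h$ is strictly decreasing, a contradiction. With a unique fixed point we have $P(R)>R$ below $R_*(0)$ and $P(R)<R$ above it. So the iterates $P^n(R_0)$ are monotone and converge to $R_*(0)$, and continuous dependence on initial data over $[0,T]$ gives $|R(t)-R_*(t)|\to0$.
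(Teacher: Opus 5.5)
Your argument is correct but organized differently from the paper's. The paper handles the two vanishing cases separately. For $\bar\phi<\bar\psi$ it uses the direct bound $R(t)<Me^{\frac{\mu nT}{2}(\bar\phi-\bar\psi)}$. For $\bar\phi=\bar\psi$ it first shows $\liminf R=0$ by contradiction, using $f_0(R)\le f_0(a/2)<\tfrac12$, and then upgrades this to a limit via $R(t+T)<R(t)$. Existence comes from the constant upper solution $R_+$ with $f_0(R_+)=\psi_*/(2\phi^*)$. Convergence comes from monotonicity of $w=\ln(R/R_*)$ in continuous time, plus a compactness argument on $[R_{\min},R_{\max}]$ showing $\lim w=0$.

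You instead route everything through the Poincar\'e map $P$ and the single identity $\int_0^T\phi\,h(R)\,dt=T\bar\psi$, valid on any positive periodic orbit. That identity rules out a positive limit of $R(nT)$ in the strict and borderline cases at once. It also gives uniqueness, which is essentially the paper's argument integrated over a period. Combined with monotonicity of $P$ and the sign of $P(R)-R$ on each side of $R_*(0)$, it yields convergence by elementary one-dimensional dynamics.

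Your version is more unified and costs nothing extra at $\bar\phi=\bar\psi$. The paper's version gives an explicit exponential rate when $\bar\phi<\bar\psi$. More importantly, it gives the explicit bound $R_*\le R_+$, which is reused in Section 3 (e.g.\ $\mu_*\ge 12/(\alpha\bar\phi R_+^4)$). Your large-$R_0$ argument does not provide such a bound.

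You should write out the one-period bounds $R_0e^{-\mu\psi^*T/2}\le R(t)\le R_0e^{\mu\phi^*T/2}$ for $t\in[0,T]$. Your claims that $h$ is close to $1$ (resp.\ small) along the orbit rest on them.

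Finally, the paper does not prove the monotonicity of $f_0=h/2$; it cites Lemma 2.1 of Liu--Hu. Your reduction via $1/h=R/(2\alpha)+RI_0/(2I_1)$ is sound, and the power-series option closes it. By the product formula for $I_\mu I_\nu$, the coefficient of $(x/2)^{2k+1}$ in $2I_0I_1-x(I_0^2-I_1^2)$ is $\frac{2(2k)!}{(k!)^2((k+1)!)^2}\bigl[(2k+1)-(k+1)^2+k(k+1)\bigr]=\frac{2k\,(2k)!}{(k!)^2((k+1)!)^2}$. This is $0$ for $k=0$ and positive for $k\ge1$, so the numerator is positive on $(0,\infty)$.
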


Biologically, the results of Theorem \ref{dich} show that the long-time fate of the tumor is governed by the balance between the average nutrient supply and the average nutrient demand. In the model \eqref{seqn1} -- \eqref{seqn6}, the function $\phi(t)$ represents the nutrient supply available to the tumor, while $\psi(t)$ represents the nutrient threshold, or demand, required for net tumor growth. If the average supply does not exceed the average demand, namely $\bar{\phi}\le \bar{\psi}$, then the tumor cannot sustain long-term growth, and its radius tends to zero. On the other hand, if the average supply exceeds the average demand, namely $\bar{\phi}>\bar{\psi}$, then the tumor persists and eventually approaches a stable periodic pattern which synchronizes the nutrient cycles.

Moreover, we study the linear stability of the radially symmetric periodic solution under nonradial perturbations of the form 
\[
\partial \Omega(t): r=R_*(t) + \varepsilon \rho(\theta,t).
\]
Under a stronger assumption $\phi_*>\psi^*$, we prove that the periodic radial solution is linearly stable for sufficiently small tumor
aggressiveness parameter $\mu$.

\begin{thm}\label{linearstable}
    Assume that $\phi_* > \psi^*$. Then there exists $\mu_*>0$ such that, for $0<\mu<\mu_*$, the radially symmetric $T$-periodic solution $(u_*(r,t),\,p_*(r,t),\,R_*(t))$ is linearly stable in the sense that there exist constants $a_1,b_1,\delta,C$, such that
    \begin{equation}
        \label{linearstability}
        |\rho(\theta,t) - \big(a_1 \cos(\theta) + b_1 \sin(\theta)\big)| \le C e^{-\delta t},
    \end{equation}
    as $t\rightarrow \infty$.
\end{thm}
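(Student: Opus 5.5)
The plan is to linearize the system \eqref{seqn1}--\eqref{seqn6} around the radial periodic solution $(u_*,p_*,R_*)$ and expand the perturbation in Fourier modes in $\theta$. Each mode then satisfies a scalar linear nonautonomous ODE, which can be solved explicitly.

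\textbf{Step 1: the radial solution and uniform bounds on $R_*$.} First I write the radial solution explicitly. From \eqref{seqn1}--\eqref{seqn2},
\[
u_*(r,t)=\frac{\alpha\,\phi(t)\,I_0(r)}{I_1(R_*)+\alpha I_0(R_*)},
\]
and integrating \eqref{seqn3} gives an ODE of the form $R_*'=\mu R_*\,F(R_*,t)$. Here
\[
F(R,t)=\phi(t)\,g(R)-\tfrac12\psi(t),
\]
where $g$ is a Bessel ratio that decreases from $1/2$ to $0$ as $R$ increases. Using the assumption $\phi_*>\psi^*$, I then show that $R_*(t)$ lies between two constants $0<R_{\min}\le R_*(t)\le R_{\max}$ that do not depend on $\mu$. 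These constants are the equilibrium radii of the frozen problems with coefficient pairs $(\phi_*,\psi^*)$ and $(\phi^*,\psi_*)$, obtained by a comparison argument. The uniqueness and attractivity of $R_*$ come from Theorem \ref{dich}.

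\textbf{Step 2: linearization and the modal ODEs.} I set
\[
r=R_*+\varepsilon\rho,\qquad u=u_*+\varepsilon w,\qquad p=p_*+\varepsilon q,
\]
and expand $\rho=\sum_n\big(a_n(t)\cos n\theta+b_n(t)\sin n\theta\big)$.
\begin{itemize}
\item For each mode, $w=c_n(t)I_n(r)\cos n\theta$. The coefficient $c_n$ is fixed by the linearized Robin condition, which contributes $\partial_r^2u_*+\alpha\partial_ru_*$ times $\rho$.
\item The pressure perturbation $q$ solves $-\Delta q=\mu w$. Its boundary data come from the linearized curvature, $\kappa\approx 1/R_*-\varepsilon(\rho+\rho_{\theta\theta})/R_*^2$, minus $\partial_rp_*\,\rho$.
\item Linearizing \eqref{seqn5} then gives, for each $n$,
\[
a_n'(t)=-\Big(\frac{n(n^2-1)}{R_*^3}+\mu\,H_n(R_*(t),t)\Big)a_n(t),
\]
and the same equation for $b_n$.
\item The coefficients $H_n$ are explicit combinations of $\phi,\psi$ and Bessel ratios such as $I_n'(R)/I_n(R)$. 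I will verify that $|H_n|\le C$ uniformly in $n\ge2$ and in $t$, for $R\in[R_{\min},R_{\max}]$. This uses $I_n'(R)/I_n(R)\le n/R+1$, and the factors $n$ are compensated by the corresponding factors arising from $q$.
\end{itemize}

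\textbf{Step 3: mode-by-mode decay.}
\begin{itemize}
\item \emph{$n=1$.} This mode corresponds to translation of the disk, so I expect $H_1\equiv0$ exactly. Then $a_1,b_1$ are constant, which produces the term $a_1\cos\theta+b_1\sin\theta$ in \eqref{linearstability}.
\item \emph{$n=0$.} This mode is the linearization $a_0'=\mu\,\partial_R(RF)(R_*,t)\,a_0$ of the radial ODE. I will show that $\partial_R(RF)(R_*(t),t)$ has negative time average. One route is that $RF$ is concave, or decreasing past its zero, in the relevant range. An alternative route is to differentiate the periodicity identity $\int_0^T F(R_*,t)\,dt=0$ and compare it with the radial equation.
\item \emph{$n\ge2$.} Here
\[
\frac{n(n^2-1)}{R_*^3}-\mu C\ \ge\ \frac{n(n^2-1)}{2R_{\max}^3}
\]
as soon as $\mu<\mu_*:=3/(R_{\max}^3C)$. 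This gives decay at rate $e^{-\delta n^3 t}$.
\end{itemize}
Summing the modes (using, say, $\rho(\cdot,0)\in C^1$ or $L^2$ for convergence) yields \eqref{linearstability}.

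\textbf{Main obstacle.} I expect the hardest part to be the uniform-in-$n$ bound on $H_n$. It requires careful Bessel-function asymptotics, including the derivative of the Robin denominator $I_1+\alpha I_0$. It also requires checking that no term grows like $n^3$ with a coefficient of order one. A secondary difficulty is the sign of the averaged coefficient in the $n=0$ mode.
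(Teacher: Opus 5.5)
Your architecture matches the paper's. You use $\mu$-independent bounds $R_-\le R_*\le R_+$ from $\phi_*>\psi^*$ (your $R_{\min},R_{\max}$), reduce each Fourier mode to a scalar linear $T$-periodic ODE, find $n=1$ neutral, and show that $n=0$ and $n\ge 2$ decay.

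The real difference is in the modes $n\ge 2$. You want a pointwise-in-$t$ lower bound on the decay coefficient. The paper uses only the period integral $\Gamma_n(\mu)=\int_0^T A_n\,\dif t$ together with Lemma~\ref{lem:periodic}, and obtains $\mu_*$ from the first zeros of $\Gamma_n$ via $\mu_n^*\ge 2n(n^2-1)/(\alpha\bar\phi R_+^4)$. Your version gives an explicit but cruder $\mu_*$, and decay rates of order $n^3$ with no $n$-dependent prefactor. That makes the summation over modes, needed for a single pair $C,\delta$ in \eqref{linearstability}, immediate; the paper leaves that step implicit. The averaged version gives a sharper threshold and also detects instability when $\Gamma_n(\mu)<0$.

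As written, however, your key estimate $|H_n|\le C$ uniformly in $n$ is false. Besides the Bessel part, the coefficient of $\rho_n$ contains the term $(n-1)R_*'/R_*=\mu(n-1)F(R_*(t),t)$. It arises as follows:
\begin{itemize}
\item the Dirichlet datum $-\p_r p_*(R_*)\rho=R_*'\rho$ for $q$ is extended into the disk by $r^n/R_*^n$;
\item this contributes $-nR_*'\rho/R_*$ to $-\p_r q$;
\item that is only partly cancelled by $+R_*'\rho/R_*$ coming from $-\p_{rr}p_*(R_*)\rho$.
\end{itemize}
This factor $n$ is not compensated. The term is sign-indefinite and of size $O(\mu n)$ pointwise. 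It vanishes only on average, by \eqref{int-Rstar}, which is exactly why the paper integrates over a period.

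The repair is easy. Use $|H_n|\le C_1+C_2(n-1)$ with $C_2=\tfrac12\phi^*$, where $C_1$ bounds the Bessel part. That part needs no asymptotics: the paper's $H_n$ in \eqref{comp:H} satisfies $0<H_n\le \alpha\phi I_2(R)/\bigl(I_1(R)+\alpha I_0(R)\bigr)$, because the subtracted term is positive. Then $\mu<3/\bigl(R_{\max}^3(C_1+C_2)\bigr)$ still gives a decay coefficient of at least $n(n^2-1)/(2R_{\max}^3)$ for every $n\ge 2$.

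For $n=0$, you need neither concavity nor a pointwise sign; the latter can fail when $F(R_*(t),t)>0$. Write $\p_R(RF)=F+R\phi f_0'(R)$. The time average of $F(R_*(t),t)$ is zero by periodicity. Moreover $f_0'<0$ strictly, since $(\ln f_0)'=I_2/I_1-(I_1'+\alpha I_1)/(I_0'+\alpha I_0)<0$ by Lemma~\ref{besselineq2} with $n=1$. This is exactly the paper's proof that $\int_0^T A_0\,\dif t>0$.
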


\begin{remark}
    The $n=1$ mode is excluded in \eqref{linearstability} because it corresponds to a translation of the tumor center. 
\end{remark}

\begin{remark}
The stronger assumption $\phi_*>\psi^*$ is used only in the linear stability
analysis, where it provides $\mu$-independent positive lower and upper bounds
for $R_*(t;\mu)$. Under the weaker averaged condition $\bar\phi>\bar\psi$, the
existence and uniqueness of the positive $T$-periodic solution still hold.
However, this averaged condition alone does not yield a $\mu$-independent
positive lower bound for $R_*(t;\mu)$. The linear stability of $(u_*(r,t),\,p_*(r,t),\,R_*(t))$ under only the
averaged condition $\bar\phi>\bar\psi$ remains open.
\end{remark}

The structure of this paper is as follows. In Section 2, we study the radially symmetric case and prove Theorem \ref{dich}, treating the vanishing and persistence cases separately. In Section 3, we linearize \eqref{seqn1} -- \eqref{seqn6} around the radially symmetric $T$-periodic solution and prove Theorem \ref{linearstable}. In Section 4, we present numerical simulations to illustrate our theoretical results.

\section{Radially Symmetric Case}
We consider the radially symmetric case, in which the tumor domain is a disk of the form
\[
\Omega(t)=\{\bm{x}\in\mathbb R^2:\ |\bm{x}|<R(t)\},
\]
where $R(t)$ denotes the tumor radius, and the nutrient concentration and pressure are radially symmetric, namely
\[
u(\bm{x},t)=u(r,t),\qquad p(\bm{x},t)=p(r,t),\qquad r=|\bm{x}|.
\]
Under the radial symmetry assumption, the free boundary problem \eqref{seqn1} -- \eqref{seqn5} reduces to a 1-D moving boundary problem for $(u(r,t),p(r,t),R(t))$. The governing system then becomes
\begin{eqnarray}
& \displaystyle \label{eqn2.1} 0 = u_{rr}+\frac1r u_r- u  \qquad 0<r<R(t),\ t>0,\\
& \displaystyle \label{eqn2.2} u_r+\alpha\bigl(u-\phi(t)\bigr) = 0  \qquad r=R(t),\ t>0,\\
& \displaystyle \label{eqn2.3} -\left(p_{rr}+\frac1r p_r\right) = \mu\bigl(u-\psi(t)\bigr) \qquad 0<r<R(t),\ t>0,\\
& \displaystyle \label{eqn2.4} p  =  \frac1{R(t)}  \qquad r=R(t),\ t>0,\\
& \displaystyle \label{eqn2.5} R'(t) = -p_r(R(t),t) \qquad t>0.
\end{eqnarray}
For this reduced system, $u(r,t)$ can be solved from \eqref{eqn2.1} -- \eqref{eqn2.2}:
\begin{equation}\label{sol:u}
    u(r,t) = \frac{\alpha \phi(t)}{I_1(R(t))+\alpha I_0(R(t))}I_0(r),
\end{equation}
where $I_0(r)$ and $I_1(r)$ are the modified Bessel functions of the first kind. Multiplying \eqref{eqn2.3} by $r$, integrating over $(0,R(t))$, and using \eqref{eqn2.5}, we obtain
\[
\frac{\dif R}{\dif t} = \frac{\mu}{R(t)}\int_0^{R(t)} (u-\psi(t)) r \dif r.
\]
Substituting \eqref{sol:u} into it yields
\begin{equation}\label{eqn:R}
\begin{split}
    \frac{\dif R}{\dif t} &= \frac{\mu}{R(t)} \left(\frac{\alpha \phi(t)}{I_1(R(t))+\alpha I_0(R(t))} \int_0^{R(t)}  I_0(r) r \dif r -\psi(t) \int_0^{R(t)}  r \dif r \right) \\
    &= \frac{\mu}{R(t)}  \left( \frac{\alpha \phi(t)}{I_1(R(t)) + \alpha I_0(R(t))}R(t) I_1(R(t)) -  \frac{\psi(t)}2 R^2(t) \right)\\
    &= \mu R(t) \left( f_0(R(t))\phi(t) - \frac12 \psi(t)\right),
    \end{split}
\end{equation}
where $f_0(r)$ is defined as
\begin{equation}\label{f0}
    f_0(r) := \frac{\alpha I_1(r)}{r(I_1(r) + \alpha I_0(r))}.
\end{equation}
Hence, the radially symmetric free-boundary problem is reduced to the following ODE with two time-periodic coefficients:
\begin{equation}\label{sys1}
    \frac{\dif R}{\dif t} = \mu R\left( f_0(R)\phi(t) - \frac12 \psi(t)\right),
\end{equation}
and we prescribe a positive initial condition
\begin{equation}\label{sys2}
    R(0) = R_0 > 0.
\end{equation} 

We shall use the following property of $f_0$; the proof can be found in Lemma 2.1 in \cite{liu2025periodic}.

\begin{lem} \label{lem:f0}
    The function $f_0(r)$ is decreasing in $r$ for $0<r<\infty$ and $f_0(0^+) = \frac12$ and $f_0(+\infty) = 0$.
\end{lem}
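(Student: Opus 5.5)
The plan is to work with the reciprocal of $f_0$, which splits into two simpler terms. Since $I_1(r)>0$ for $r>0$, we have
\[
\frac{1}{f_0(r)} = \frac{r\bigl(I_1(r)+\alpha I_0(r)\bigr)}{\alpha I_1(r)} = \frac{r}{\alpha} + \frac{1}{h(r)}, \qquad h(r):=\frac{I_1(r)}{r I_0(r)} .
\]
It therefore suffices to show that $h$ is decreasing on $(0,\infty)$, with $h(0^+)=\tfrac12$. Then $1/f_0$ is strictly increasing, being the sum of the strictly increasing function $r/\alpha$ and the nondecreasing function $1/h$. Hence $f_0$ is decreasing.

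For the monotonicity of $h$, I would use the power series of the Bessel functions:
\[
\frac{I_1(r)}{r}=\sum_{k\ge0} a_k r^{2k},\quad a_k=\frac{1}{2^{2k+1}k!(k+1)!},\qquad I_0(r)=\sum_{k\ge0} b_k r^{2k},\quad b_k=\frac{1}{2^{2k}(k!)^2}.
\]
Both series have positive coefficients and infinite radius of convergence. The coefficient ratio $a_k/b_k=\frac{1}{2(k+1)}$ is strictly decreasing in $k$. By the classical monotone ratio lemma for power series (Biernacki--Krzy\.z), the quotient $h(r)=\sum a_k r^{2k}/\sum b_k r^{2k}$ is therefore strictly decreasing in $r>0$. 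Its value at $0^+$ is $a_0/b_0=\tfrac12$.

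This step is the only real obstacle, so I would also keep a self-contained fallback that avoids citing the lemma. Set $y=I_1/I_0$ and use $I_0'=I_1$ and $I_1'=I_0-I_1/r$. This gives the Riccati equation $y'=1-y/r-y^2$, and hence
\[
h'(r)=\frac{1}{r^2}\Bigl(r - 2y - r y^2\Bigr).
\]
To show $h'<0$ I would need the bound $y<\frac{2r}{1+\sqrt{1+r^2}}$. I would try to prove it by a comparison argument for the Riccati equation starting from $y(0)=0$, checking that this explicit function is a strict supersolution. This check is where I expect most of the work.

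For the limits: as $r\to0^+$, $1/f_0\to 0+1/h(0^+)=2$, so $f_0(0^+)=\tfrac12$. As $r\to\infty$, $h(r)>0$ gives $1/f_0(r)\ge r/\alpha\to\infty$, so $f_0(+\infty)=0$. These limits complete the proof of Lemma \ref{lem:f0}.
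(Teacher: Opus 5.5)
The paper gives no argument of its own for this lemma; it cites Lemma 2.1 of \cite{liu2025periodic}. Your main argument is a correct, self-contained proof. Writing $1/f_0(r)=r/\alpha+rI_0(r)/I_1(r)$ puts all the $\alpha$-dependence into the strictly increasing term $r/\alpha$. The lemma then reduces to the classical, $\alpha$-independent fact that $h(r)=I_1(r)/(rI_0(r))$ decreases from $h(0^+)=\tfrac12$. The Biernacki--Krzy\.z lemma in the variable $x=r^2$, with $a_k/b_k=1/(2(k+1))$ strictly decreasing, gives exactly this, and your two limits are right. Your argument also yields \emph{strict} monotonicity of $f_0$, which the paper relies on later (uniqueness of $R_*$, and the compactness step in Theorem \ref{thm:persist}).

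Your fallback, however, would fail as written. Since $r-2y-ry^2$ is strictly decreasing in $y>0$, an upper bound on $y=I_1/I_0$ can only bound $h'$ from below. The condition $h'<0$ is equivalent to the \emph{lower} bound
\[
y>\frac{\sqrt{1+r^2}-1}{r}=\frac{r}{1+\sqrt{1+r^2}},
\]
the positive root of $ry^2+2y-r=0$; your constant is also off by a factor $2$. The inequality you propose, $y<2r/(1+\sqrt{1+r^2})$, is in fact true, since it follows from $y<\min\{1,r/2\}$, but it says nothing about the sign of $h'$.

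A supersolution comparison is therefore the wrong tool. You would need $z=r/(1+\sqrt{1+r^2})$ to be a strict \emph{sub}solution, which it is. On that curve $1-z/r-z^2=z/r$, while $z'=\frac{1}{\sqrt{1+r^2}\,(1+\sqrt{1+r^2})}<\frac{1}{1+\sqrt{1+r^2}}=\frac zr$. Some care is needed at the singular point $r=0$, where both $y$ and $z$ vanish with slope $\tfrac12$.

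A simpler fix: using $I_0'=I_1$ and $I_1'=I_2+I_1/r$ from \eqref{bessel4},
\[
h'(r)=\frac{I_0(r)I_2(r)-I_1(r)^2}{r\,I_0(r)^2}.
\]
This is negative by the Tur\'an-type inequality of Lemma \ref{besselineq1} with $n=1$, already proved in the paper's appendix. That makes the lemma self-contained without either Biernacki--Krzy\.z or a Riccati comparison.
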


We begin by establishing the global well-posedness of the initial value problem \eqref{sys1} -- \eqref{sys2}.

\begin{thm}\label{thm:pos}
For each $R_0>0$, the initial value problem \eqref{sys1}--\eqref{sys2} admits a unique global solution
\[
R \in C^1([0,\infty)).
\]
Moreover, this solution satisfies
\[
R(t)>0 \qquad \text{for all } t\ge 0.
\]
\end{thm}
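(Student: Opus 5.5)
The argument has three steps: local existence and uniqueness by Picard--Lindel\"of, positivity by writing the ODE in logarithmic form, and global existence from a priori exponential bounds that rule out blow-up and collapse to zero in finite time. Set
\[
F(t,R) := \mu R\Bigl(f_0(R)\phi(t) - \tfrac12\psi(t)\Bigr), \qquad R>0 .
\]
First I check the regularity of $F$. The Bessel functions $I_0, I_1$ are smooth and positive on $(0,\infty)$, and $\alpha>0$, so $f_0$ defined in \eqref{f0} is smooth on $(0,\infty)$. I take $\phi,\psi$ to be continuous and $T$-periodic, hence bounded. Then $F$ is continuous in $t$ and locally Lipschitz in $R$ on $\mathbb R\times(0,\infty)$. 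By Picard--Lindel\"of there is a unique $C^1$ solution of \eqref{sys1}--\eqref{sys2} on a maximal interval $[0,T_{\max})$, with $R(t)>0$ there. Moreover, if $T_{\max}<\infty$, then either $R(t)\to\infty$ or $\liminf_{t\to T_{\max}} R(t)=0$.

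Second, I derive bounds. By Lemma \ref{lem:f0}, $0<f_0(r)<\tfrac12$ for all $r>0$. Since $R>0$ on the existence interval, divide \eqref{sys1} by $R$ to get
\[
\frac{\dif}{\dif t}\ln R(t) = \mu\Bigl(f_0(R(t))\phi(t) - \tfrac12\psi(t)\Bigr).
\]
The right-hand side lies in the interval $\bigl[-\tfrac{\mu}{2}\psi^*,\ \tfrac{\mu}{2}\phi^*\bigr]$, using $\phi,\psi>0$. Integrating from $0$ to $t$ gives
\[
R_0 e^{-\mu\psi^* t/2} \le R(t) \le R_0 e^{\mu\phi^* t/2}, \qquad 0\le t<T_{\max}.
\]
These bounds show that $R$ stays in a compact subset of $(0,\infty)$ on every bounded time interval. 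This excludes both alternatives in the blow-up criterion, so $T_{\max}=\infty$. Positivity for all $t\ge0$ follows from the lower bound.

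The only delicate point is the singularity of $f_0$-type expressions near $R=0$, since $F$ is defined via $f_0$ only for $R>0$. This is handled because $f_0$ is bounded by Lemma \ref{lem:f0}. One could also extend $F$ to $R\le 0$ by $F(t,0)=0$: since $f_0(0^+)=\tfrac12$ and $f_0$ is analytic at $0$ (it is an even function of $r$), $F$ is Lipschitz near $R=0$, and uniqueness then shows that $R\equiv 0$ cannot be reached from $R_0>0$. However, the logarithmic estimate above already makes this unnecessary.
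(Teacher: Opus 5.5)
Your proposal is correct and follows essentially the same route as the paper. Both arguments use Picard--Lindel\"of on $(0,\infty)$, then two-sided exponential bounds for $\ln R$ obtained from $0<f_0<\tfrac12$, then the continuation theorem to conclude $T_{\max}=\infty$. Your upper rate $\tfrac{\mu}{2}\phi^*$ is cruder than the paper's $\tfrac{\mu}{2}(\phi^*-\psi_*)$, which is immaterial. You are also right that positivity on the maximal interval is automatic, so the paper's separate contradiction argument for it is redundant.

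One small inaccuracy sits in your optional final remark: $f_0$ is not even, because $I_1+\alpha I_0$ combines an odd and an even function. It is nonetheless analytic at $0$, since $I_1(r)/r$ is entire and $I_1(0)+\alpha I_0(0)=\alpha>0$, and in any case the remark is not needed.
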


\begin{proof}
Define
\[
F(t,R):=\mu R\left(f_0(R)\phi(t)-\frac12\psi(t)\right), \qquad t\ge 0,\; R>0.
\]
Since $I_0$ and $I_1$ are smooth and $I_1(R)+\alpha I_0(R)>0$ for $R>0$, the map $F$ is continuous in $t$ and locally Lipschitz continuous in $R$ on $(0,\infty)$. Hence, by the Picard--Lindel\"of theorem, there exists a unique maximal solution
\[
R \in C^1([0,\tau_{\max}))
\]
of \eqref{sys1}--\eqref{sys2}, where $\tau_{\max}\in(0,\infty]$.

By Lemma \ref{lem:f0}, we have
\[
0<f_0(R)< \frac12 \qquad \text{for all } R>0.
\]
Therefore,
\begin{equation}
    \label{add1}
    -\frac12\psi^* < f_0(R(t))\phi(t)-\frac12\psi(t)
< \frac12(\phi^*-\psi_*)
\qquad \text{for all } t\in[0,\tau_{\max}).
\end{equation}

We first show that $R(t)$ remains positive on $[0,\tau_{\max})$. Suppose, to the contrary, that there exists $t_0\in(0,\tau_{\max})$ such that $R(t_0)=0$. Set
\[
\tau:=\inf\{t\in(0,\tau_{\max}) : R(t)=0\}.
\]
Then, $R(t)>0$ for all $t\in[0,\tau)$. By \eqref{add1}, we have, on $[0,\tau)$, 
\[
-\frac{\mu}{2}\psi^* <  \frac{R'(t)}{R(t)} = \mu\left(f_0(R(t))\phi(t)-\frac12\psi(t)\right)
< \frac{\mu}{2}(\phi^*-\psi_*).
\]
Integrating over $[0,t]$ for $0\le t<\tau$, we obtain
\[
-\frac{\mu}{2}\psi^* t
<  \ln \frac{R(t)}{R_0}
< \frac{\mu}{2}(\phi^*-\psi_*)t,
\]
or equivalently,
\[
R_0 e^{-\frac{\mu}{2}\psi^* t}
< R(t)
<  R_0 e^{\frac{\mu}{2}(\phi^*-\psi_*)t},
\qquad 0\le t<\tau.
\]
Letting $t\rightarrow \tau^-$, we get
\[
R(\tau) >  R_0 e^{-\frac{\mu}{2}\psi^*\tau}>0,
\]
which contradicts the definition of $\tau$. Thus,
\[
R(t)>0 \qquad \text{for all } t\in[0,\tau_{\max}).
\]

Next we prove that $\tau_{\max}=\infty$. If $\tau_{\max}<\infty$, then the above estimate yields
\[
R_0 e^{-\frac{\mu}{2}\psi^*\tau_{\max}}
< R(t) <
R_0 e^{\frac{\mu}{2}(\phi^*-\psi_*)\tau_{\max}}
\qquad \text{for all } t\in[0,\tau_{\max}),
\]
so the solution stays in a compact subset of $(0,\infty)$ on its maximal interval of existence. Since $F$ is locally Lipschitz in $R$ on $(0,\infty)$, the standard continuation theorem for ODEs implies that $R$ can be extended beyond $\tau_{\max}$, contradicting the maximality of $\tau_{\max}$. Therefore, $\tau_{\max}=\infty$.

Hence the initial value problem \eqref{sys1}--\eqref{sys2} admits a unique global solution, and this solution is strictly positive for all $t\ge 0$.
\end{proof}

We next divide the analysis into two cases according to the relation between $\bar{\phi}$ and $\bar{\psi}$: the vanishing case and the persistence case.

\subsection{Vanishing case.}
\begin{thm}\label{thm:vanish}
If $\bar{\phi}\le \bar{\psi}$, then the unique positive solution of
\eqref{sys1}--\eqref{sys2} satisfies
\begin{equation}\label{lim1}
\lim_{t\rightarrow\infty} R(t)=0.
\end{equation}
\end{thm}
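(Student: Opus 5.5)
We work with $\ln R$, since \eqref{sys1} gives
\[
\frac{\dif}{\dif t}\ln R(t)=\mu\Bigl(f_0(R(t))\phi(t)-\tfrac12\psi(t)\Bigr).
\]
Integrating over one period $[nT,(n+1)T]$ and writing $f_0=\tfrac12-(\tfrac12-f_0)$, we get
\[
\ln\frac{R((n+1)T)}{R(nT)}=\frac{\mu T}{2}(\bar\phi-\bar\psi)-\mu\int_{nT}^{(n+1)T}\Bigl(\tfrac12-f_0(R(t))\Bigr)\phi(t)\dif t .
\]
By Lemma \ref{lem:f0} the integrand in the last term is strictly positive, because $f_0<\tfrac12$ and $\phi>0$. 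Since $\bar\phi\le\bar\psi$, the sequence $a_n:=\ln R(nT)$ is therefore strictly decreasing. It follows that $R(nT)$ decreases to some limit $L\ge 0$.

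Next we show $L=0$. Suppose instead that $L>0$. Then $R(nT)\ge L$ for all $n$. On each period, the bound \eqref{add1} integrated as in the proof of Theorem \ref{thm:pos} gives
\[
R(t)\in\bigl[L e^{-\mu\psi^*T/2},\; R_0e^{\mu(\phi^*-\psi_*)T/2}\bigr]=:[m,M],
\]
a compact subset of $(0,\infty)$ independent of $n$. Since $f_0$ is decreasing, $\tfrac12-f_0(R(t))\ge \tfrac12-f_0(m)=:\eta>0$ on this range. Hence
\[
a_{n+1}-a_n\le -\mu\eta T\phi_*<0
\]
uniformly in $n$, so $a_n\to-\infty$. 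This contradicts $a_n\ge\ln L$. We conclude that $R(nT)\to0$.

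Finally we pass from the discrete times to the full limit. For $t\in[nT,(n+1)T]$, \eqref{add1} gives
\[
R(t)\le R(nT)\,e^{\mu T(\phi^*-\psi_*)^+/2}.
\]
Combined with $R(nT)\to0$, this yields \eqref{lim1}.

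The only delicate point is the case $\bar\phi=\bar\psi$, where the averaged growth rate gives no decay by itself. The argument handles it through the strict inequality $f_0<\tfrac12$ (Lemma \ref{lem:f0}) together with the uniform two-sided control of $R$ within one period, which upgrades strict decrease of $a_n$ into a uniform decrement as long as $R$ stays away from $0$.
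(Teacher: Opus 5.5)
Your argument is correct. It uses the same ingredients as the paper: integrating $\frac{\dif}{\dif t}\ln R$ over a period, the strict bound $f_0<\tfrac12$, a contradiction from a uniform per-period decrement once $R$ stays away from $0$, and the within-period growth bound from \eqref{add1}. The organization, however, is different.

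The paper splits into two cases:
\begin{itemize}
\item For $\bar\phi<\bar\psi$, it gets the explicit estimate $R(t)<Me^{\frac{\mu nT}{2}(\bar\phi-\bar\psi)}$ directly.
\item For $\bar\phi=\bar\psi$, it proves $R(t+T)<R(t)$ for all $t$, then shows $\liminf_{t\to\infty}R(t)=0$ by contradiction. It then needs an $\varepsilon$-argument along the times $t_N+n_0T$ to upgrade the $\liminf$ to a limit.
\end{itemize}

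You avoid the case split by working with the monotone sequence $R(nT)$. Monotone convergence gives a limit $L$ immediately, so the final passage to continuous time becomes a one-line estimate. The small cost is that you must use the lower bound in \eqref{add1} to turn $L>0$ into a uniform positive lower bound for $R(t)$ on every period; the paper gets this for free from its $\liminf$ hypothesis. What the paper's split buys is an explicit decay rate $\frac{\mu}{2}(\bar\psi-\bar\phi)$ in the strict case, though that also follows at once from your first identity.

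One harmless slip: the upper endpoint of your interval $[m,M]$ should use $(\phi^*-\psi_*)^+$. The case $\phi^*<\psi_*$ can occur in this regime, and then your $M$ is smaller than $R_0$. Since only $m$ is used in the argument, nothing breaks.
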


\begin{proof}
By Theorem \ref{thm:pos}, the solution $R(t)$ is well defined and strictly
positive for all $t\ge0$.

\vspace{5pt}
We divide the proof into two cases:

\vspace{5pt}
\noindent
\textbf{Case 1: $\bar{\phi}<\bar{\psi}$.}
Let $\tau\in[0,T]$ and $n\in\mathbb{N}$. Since $0<f_0(r)< \frac12$ for all
$r>0$ by Lemma \ref{lem:f0}, we have
\[
\frac{R'(t)}{R(t)}
=\mu\left(f_0(R(t))\phi(t)-\frac12\psi(t)\right)
< \frac{\mu}{2}\bigl(\phi(t)-\psi(t)\bigr).
\]
Integrating over $[\tau,\tau+nT]$, we obtain
\[
\begin{aligned}
\ln \frac{R(\tau+nT)}{R(\tau)}
&=\int_\tau^{\tau+nT}\frac{R'(t)}{R(t)}\,\dif t \\
&< \frac{\mu}{2}\int_\tau^{\tau+nT}\bigl(\phi(t)-\psi(t)\bigr)\,\dif t = \frac{\mu nT}{2}\,(\bar{\phi}-\bar{\psi}),
\end{aligned}
\]
where the last equality follows from the $T$-periodicity of $\phi$ and $\psi$.
Hence,
\begin{equation*}
    R(\tau+nT) < R(\tau)e^{\frac{\mu nT}{2}(\bar{\phi}-\bar{\psi})}.
\end{equation*}
Set
\[
M:=\max_{0\le t\le T}R(t).
\]
Then, for any $t\ge0$, writing $t=nT+\tau$ with $\tau\in[0,T)$, we get
\[
0<R(t)<  M e^{\frac{\mu nT}{2}(\bar{\phi}-\bar{\psi})}.
\]
Since $\bar{\phi}-\bar{\psi}<0$, the right-hand side tends to $0$ as
$n\rightarrow\infty$. Therefore, \eqref{lim1} holds.

\vspace{10pt}
\noindent
\textbf{Case 2: $\bar{\phi}=\bar{\psi}$.} For any $t\ge 0$, following the same procedures as in Case 1, we have
\[
\begin{aligned}
\ln\frac{R(t+T)}{R(t)}
&= \int_t^T \frac{R'(s)}{R(s)} \dif s  \\
&< \frac{\mu}{2}\int_t^{t+T} \bigl(\phi(s) - \psi(s)\bigr) \dif s = \frac{\mu T}{2}\bigl(\bar{\phi}-\bar{\psi}\bigr) = 0,
\end{aligned}
\]
which implies 
\begin{equation}\label{Rprop}
    R(t+T) < R(t) \qquad \text{for all } t\ge0.
\end{equation}

We first claim that
\begin{equation}\label{infest}
    \liminf_{t\rightarrow\infty}R(t)=0.
\end{equation}
Assume, to the contrary, that
\[
\liminf_{t\rightarrow\infty}R(t)=a>0.
\]
Then, there exists $t_1>0$ such that
\begin{equation} \label{contradict}
R(t) > \frac{a}2 \qquad \text{for all } t \ge t_1.
\end{equation}
Since $f_0$ is decreasing,
\[
f_0(R(t))\le f_0\!\left(\frac{a}{2}\right)<\frac12
\qquad \text{for all } t \ge t_1.
\]
With this, we derive a more refined estimate than \eqref{Rprop}. For all $t  \ge t_1$
\[
\begin{aligned}
\ln\frac{R(t+T)}{R(t)}
= \int_t^T \frac{R'(s)}{R(s)} \dif s &= \mu \int_t^{t+T} \left( f_0(R(s)) \phi (s) - \frac12 \psi(s)\right) \dif s\\
&= \mu \int_t^{t+T} \left( f_0(R(s)) - \frac12 \right)\phi(s) \dif s \, + \, \frac{\mu T}{2}\left(\bar{\phi} - \bar{\psi}\right)\\
&\le \mu \int_t^{t+T} \left( f_0\!\left(\frac{a}{2}\right) - \frac12 \right)\phi(s) \dif s \, + \, \frac{\mu T}{2}\left(\bar{\phi} - \bar{\psi}\right)\\
&= -\mu \left( \frac12 - f_0\!\left(\frac{a}{2}\right)  \right) T \bar{\phi}.
\end{aligned}
\]
Hence,
\[
R(t+T) \le R(t) e^{-\mu \left( \frac12 - f_0\!\left(\frac{a}{2}\right) \right)T \bar{\phi}} \qquad \text{for all }t \ge t_1.
\]
Iterating this inequality, we obtain, for all $t\ge t_1$,
\[
R(t+nT)\le R(t)e^{-\mu \left( \frac12 - f_0\!\left(\frac{a}{2}\right)  \right)nT \bar{\phi}} \rightarrow 0 
\qquad \text{as } n \rightarrow \infty,
\]
which contradicts \eqref{contradict}. Therefore, \eqref{infest} holds.

Finally, we show that
\[
\lim_{t\to\infty}R(t)=0.
\]
Let $\varepsilon>0$ be arbitrary. From \eqref{infest}, there exists a sequence $\{t_n\}_{n=1}^\infty$ such that
\[
t_n\to\infty
\qquad\text{and}\qquad
R(t_n)\to0
\quad\text{as } n\to\infty.
\]
Hence, we may choose $N$ sufficiently large so that
\[
R(t_N)<\varepsilon e^{-\frac{\mu}{2}\phi^*T}.
\]
Now let $t\ge t_N$ be arbitrary. Then, there exists an integer $n_0\ge0$ such that 
\[
t \in [t_N+n_0 T, t_N+(n_0+1)T).
\]
Since \eqref{Rprop} remains valid for any $t\ge 0$, we have
\[
R(t_N+n_0T) <  R(t_N) < \varepsilon e^{-\frac{\mu}{2}\phi^*T}.
\]
On the other hand, for $s\in[t_N+n_0T,t]$,
\begin{equation}\label{inter1}
    \frac{R'(s)}{R(s)}
=\mu\left(f_0(R(s))\phi(s)-\frac12\psi(s)\right)
\le \frac{\mu}{2}\phi^*.
\end{equation}
Integrating over $[t_N+n_0T,t]$, we obtain
\begin{equation}\label{inter2}
\ln\frac{R(t)}{R(t_N+n_0T)}
\le \frac{\mu}{2}\phi^*(t-t_N-n_0T)
\le \frac{\mu}{2}\phi^*T,
\end{equation}
and thus
\begin{equation}\label{inter3}
R(t)\le R(t_N+n_0T)e^{\frac{\mu}{2}\phi^*T}
< R(t_N) e^{\frac{\mu}{2}\phi^*T}
<\varepsilon.
\end{equation}

Since $\varepsilon>0$ was arbitrary, it follows that
\[
\lim_{t\to\infty}R(t)=0.
\]
This completes the proof.
\end{proof}

\subsection{Persistence case.}
\begin{thm}\label{thm:periodic}
    If $\bar{\phi} > \bar{\psi}$, the ODE \eqref{sys1} admits a unique $T$-periodic positive solution $R_*(t)$.
\end{thm}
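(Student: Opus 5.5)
We will work with the logarithmic variable $w(t)=\ln R(t)$, so that \eqref{sys1} becomes
\[
w'(t)=\mu\Bigl(f_0(e^{w})\phi(t)-\tfrac12\psi(t)\Bigr)=:G(t,w),
\]
where $G$ is $T$-periodic in $t$. By Lemma \ref{lem:f0} and positivity of $\phi$, $G$ is strictly decreasing in $w$. Positive $T$-periodic solutions of \eqref{sys1} correspond exactly to fixed points of the Poincar\'e map $P(w_0)=w(T;w_0)$. By Theorem \ref{thm:pos}, this map is defined for every $w_0\in\mathbb{R}$, and it is continuous and strictly increasing by uniqueness of solutions.

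\textbf{Existence.} We will produce $w_1<w_2$ with $P(w_1)>w_1$ and $P(w_2)<w_2$; the intermediate value theorem then gives a fixed point in $(w_1,w_2)$. Integrating over a period gives
\[
P(w_0)-w_0=\mu\int_0^T\Bigl(f_0(R(s))\phi(s)-\tfrac12\psi(s)\Bigr)\dif s .
\]
For the lower point, choose $\eta>0$ with $f_0(\eta)\bar\phi>\frac12\bar\psi$. This is possible because $f_0(0^+)=\frac12$ and $\bar\phi>\bar\psi$. Next use the a priori bound from the proof of Theorem \ref{thm:pos}, namely $R(s)<R_0e^{\frac{\mu}{2}(\phi^*-\psi_*)T}$ on $[0,T]$. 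Taking $R_0$ small enough that this bound is below $\eta$, monotonicity of $f_0$ gives $P(w_1)-w_1\ge \mu T\bigl(f_0(\eta)\bar\phi-\frac12\bar\psi\bigr)>0$.

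For the upper point, use $f_0(+\infty)=0$. Choose $K$ large with $f_0(K)\phi^*<\frac14\psi_*$, and use the lower bound $R(s)>R_0e^{-\frac{\mu}{2}\psi^*T}$ on $[0,T]$. Taking $R_0$ large enough that this bound exceeds $K$ gives $P(w_2)-w_2\le -\frac{\mu T}{4}\bar\psi<0$.

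\textbf{Uniqueness.} We expect this to be the more delicate step. Suppose $w_a$ and $w_b$ are two distinct $T$-periodic solutions. By uniqueness for the ODE they cannot cross, so we may assume $w_a(t)<w_b(t)$ for all $t$. Integrating each equation over one period gives zero, since both are periodic. Subtracting,
\[
0=\mu\int_0^T\bigl(f_0(e^{w_a(s)})-f_0(e^{w_b(s)})\bigr)\phi(s)\dif s .
\]
The integrand is strictly positive, because $f_0$ is strictly decreasing and $\phi>0$. This is a contradiction.

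The main point to verify is strict monotonicity of $f_0$. Lemma \ref{lem:f0} states only that $f_0$ is "decreasing". If only non-strict monotonicity is available, it suffices that $f_0$ is real-analytic and nonconstant, since it tends to $\frac12$ at $0^+$ and to $0$ at $+\infty$. Then $f_0$ cannot be constant on any interval, so $f_0(e^{w_a(s)})>f_0(e^{w_b(s)})$ on a set of positive measure. This still forces the integral to be positive, so the contradiction still holds.
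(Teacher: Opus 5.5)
Your proof is correct and follows essentially the same route as the paper. Existence uses a Poincar\'e map and the intermediate value theorem, with small initial data (where $f_0\approx\frac12$) giving $P(w_1)>w_1$. Uniqueness uses non-crossing plus strict monotonicity of $f_0$; your period integral of the difference is the same computation as the paper's observation that $\ln(R_{*1}/R_{*2})$ is strictly decreasing yet $T$-periodic. The only variation is at the upper endpoint: the paper uses the constant upper solution $R_+$ with $f_0(R_+)=\psi_*/(2\phi^*)$, which also gives the $\mu$-independent bound $R_*\le R_+$ used later in Section 3, whereas you use a large initial datum with the exponential lower bound. Your analyticity remark is valid and in fact gives strict monotonicity outright, since a nonincreasing, nonconstant real-analytic function on $(0,\infty)$ is strictly decreasing.
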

\begin{proof} 
    \textbf{Step 1:} We first prove the existence of a positive $T$-periodic solution. Define  
    \begin{equation}\label{def:F}
    \mathcal{F}(r) = R(T; 0 ,r).
    \end{equation}
    where $R(\cdot;0,r)$ denotes the unique solution of the ODE \eqref{sys1} with initial value $R(0)=r$.

    We begin by constructing an upper bound for $\mathcal{F}$. Since
    \begin{equation}\label{bound1}
        f_0(R(t))\phi(t) - \frac12 \psi(t) \le f_0(R(t))\phi^* - \frac12 \psi_* ,
    \end{equation}
    With $\bar\phi>\bar\psi$, we have
\[
\phi^* \ge \bar\phi > \bar\psi \ge \psi_*,
\]
and thus
\[
0<\frac{\psi_*}{2\phi^*}<\frac12.
\]
By Lemma \ref{lem:f0}, there exists a unique $R_+>0$ such that
\begin{equation}
    \label{upper-sol}
    f_0(R_+)=\frac{\psi_*}{2\phi^*}.
\end{equation}
Evaluating the inequality \eqref{bound1} at $R_+$, it follows that
\begin{equation}
    \label{upper-sol1}
    f_0(R_+)\phi(t)-\frac12\psi(t)
\le
f_0(R_+)\phi^*-\frac12\psi_*
=0,
\end{equation}
so the constant function $R\equiv R_+$ is an upper solution of \eqref{sys1}.
Hence, for every $R_0\in[0,R_+]$, the corresponding solution satisfies
\begin{equation}
    \label{upper-sol2}
    0\le R(t;0,R_0)\le R_+
\qquad \text{for all } t\ge 0,
\end{equation}
where nonnegativity is guaranteed by Theorem \ref{thm:pos}.
In particular,
\begin{equation}
    \label{boundRstar}
    0\le \mathcal F(R_0) = R(T;0,R_0) \le R_+,
\end{equation}
that is, $\mathcal F$ maps $[0,R_+]$ into itself.

Next, we show that $\mathcal F(r)>r$ for all sufficiently small $r>0$.
Since $\bar\phi>\bar\psi$, one may choose $\eta>0$ such that
\[
\left(\frac12-\eta\right)\bar\phi-\frac12\bar\psi>0.
\]
By Lemma \ref{lem:f0}, there exists $\delta>0$ such that
\[
f_0(r)\ge \frac12-\eta
\qquad \text{for all } 0<r\le \delta.
\]
We fix $r_0>0$ so small that
\[
r_0 e^{\frac{\mu}{2}\phi^*T}\le \delta.
\]
Following similar procedures as in \eqref{inter1}--\eqref{inter3}, we have, for all $t\in[0,T]$
\[
R(t;0,r_0)\le r_0 e^{\frac{\mu}{2}\phi^*t}\le r_0 e^{\frac{\mu}{2}\phi^*T} \le \delta.
\]
Therefore,
\[
f_0(R(t;0,r_0))\ge \frac12-\eta
\qquad \text{for all } t\in[0,T].
\]
Consequently,
\[
\begin{aligned}
\ln\frac{\mathcal F(r_0)}{r_0} = \ln\frac{ R(T;0,r_0)}{R(0;0,r_0)} 
&=\mu\int_0^T
\left(f_0(R(t;0,r_0))\phi(t)-\frac12\psi(t)\right)\,dt \\
&\ge
\mu\int_0^T
\left[\left(\frac12-\eta\right)\phi(t)-\frac12\psi(t)\right]\,dt \\
&=
\mu T\left[\left(\frac12-\eta\right)\bar\phi-\frac12\bar\psi\right]
>0.
\end{aligned}
\]
Hence, we obtain
\begin{equation}
    \label{boundr0}
    \mathcal F(r_0)>r_0.
\end{equation}

Since the solution of the ODE \eqref{sys1} depends continuously on the initial value, the mapping $\mathcal{F}$ is continuous. By \eqref{boundRstar} and \eqref{boundr0},  we have
\[
\mathcal F(r_0)-r_0>0,
\qquad
\mathcal F(R_+)-R_+\le 0.
\]
Hence, by the intermediate value theorem, there exists $r_* \in [r_0, R_+]$ such that
\[
\mathcal{F}(r_*) = r_*,
\]
which implies
\[
R(T;0,r_*) = R(0;0,r_*).
\]
Therefore, the solution with initial value $R(0) = r_*$ is a positive $T$-periodic solution, denoted by $R_*(t)$.

\hspace{15pt}

\noindent \textbf{Step 2: } Next, we prove that the $T$-periodic positive solution $R_*(t)$ is unique. 

Assume that $R_{*1}(t)$ and $R_{*2}(t)$ are two different positive $T$-periodic
solutions of \eqref{sys1}. Define
\[
w(t):=\ln\frac{R_{*1}(t)}{R_{*2}(t)}.
\]
Clearly, $w(t)$ is also $T$-periodic. Moreover, taking derivative in $t$ leads to
\[
\begin{aligned}
w'(t)
&=\frac{R_{*1}'(t)}{R_{*1}(t)}-\frac{R_{*2}'(t)}{R_{*2}(t)} \\
&=\mu\phi(t)\bigl(f_0(R_{*1}(t))-f_0(R_{*2}(t))\bigr).
\end{aligned}
\]

Suppose first that there exists $t_* >0$ such that
\[
R_{*1}(t_*)=R_{*2}(t_*).
\]
Since both $R_{*1}$ and $R_{*2}$ solve the same initial value problem at
$t=t_*$, namely
\[
R'=\mu R\left(f_0(R)\phi(t)-\frac12\psi(t)\right),
\qquad
R(t_*)=R_{*1}(t_*)=R_{*2}(t_*),
\]
the uniqueness result in Theorem \ref{thm:pos} implies that
\[
R_{*1}(t)\equiv R_{*2}(t).
\]
Therefore, we should have, either
\[
R_{*1}(t)>R_{*2}(t)\qquad \text{for all } t,
\]
or
\[
R_{*1}(t)<R_{*2}(t)\qquad \text{for all } t.
\]
Without loss of generality, assume that
\[
R_{*1}(t)>R_{*2}(t)\qquad \text{for all } t.
\]
Since $f_0$ is strictly decreasing by Lemma \ref{lem:f0}, we have
\[
f_0(R_{*1}(t))<f_0(R_{*2}(t))
\qquad \text{for all } t.
\]
Using the positivity of $\phi(t)$, it follows that
\[
w'(t)=\mu\phi(t)\bigl(f_0(R_{*1}(t))-f_0(R_{*2}(t))\bigr)<0
\qquad \text{for all } t.
\]
It means that $w$ is strictly decreasing. In particular,
\[
w(t+T)<w(t)\qquad \text{for all } t.
\]
On the other hand, since $w$ is $T$-periodic, we have
\[
w(t+T)=w(t)\qquad \text{for all } t,
\]
which is a contradiction.

Thus, the positive $T$-periodic solution $R_*(t)$ is unique. We complete the proof.
\end{proof}

Finally, we analyze the asymptotic behaviors of the initial value problem \eqref{sys1}--\eqref{sys2}.

\begin{thm}\label{thm:persist}
Assume that $\bar{\phi}>\bar{\psi}$. Let $R_*(t)$ be the unique positive
$T$-periodic solution of \eqref{sys1}. Then $R_*(t)$ is globally
asymptotically stable with respect to positive solutions. That is, for any
$R_0>0$, if $R(t)$ denotes the solution of \eqref{sys1}--\eqref{sys2}, then
\begin{equation}\label{lim}
    \lim\limits_{t\rightarrow \infty}\bigl| R(t) - R_*(t) \bigr| =0.
\end{equation}
\end{thm}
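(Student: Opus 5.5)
We work with the logarithmic difference $w(t):=\ln\bigl(R(t)/R_*(t)\bigr)$, exactly as in Step 2 of the proof of Theorem \ref{thm:periodic}. By Theorem \ref{thm:pos} both $R(t)$ and $R_*(t)$ are positive for all $t\ge0$, so $w$ is well defined and
\[
w'(t)=\mu\phi(t)\bigl(f_0(R(t))-f_0(R_*(t))\bigr).
\]
If $R_0=R_*(0)$, uniqueness gives $R\equiv R_*$ and there is nothing to prove. Otherwise the same uniqueness argument shows that $R(t)-R_*(t)$ never vanishes, so it keeps a fixed sign. We treat the case $R(t)>R_*(t)$; the other case is symmetric. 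In this case $w>0$. Because $f_0$ is strictly decreasing (Lemma \ref{lem:f0}) and $\phi>0$, we get $w'<0$. Hence $w$ decreases monotonically to a limit $L\ge 0$, and the task is to show $L=0$.

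Suppose $L>0$. Since $R_*$ is continuous and $T$-periodic, it takes values in a compact interval $[m,M]\subset(0,\infty)$. Since $w(t)\ge L$, we have $R(t)\ge e^{L}R_*(t)$. The solution $R$ also stays bounded, because $R(t)\le e^{w(0)}R_*(t)\le e^{w(0)}M$. We then define
\[
c:=\min\Bigl\{f_0(s)-f_0(e^{\ell}s):\ s\in[m,M],\ \ell\in[L,w(0)]\Bigr\}.
\]
This minimum is positive, since it is the minimum of a continuous, strictly positive function over a compact set; strict monotonicity of $f_0$ is what makes each value positive. Writing $R=e^{w}R_*$ with $w\in[L,w(0)]$, this gives
\[
w'(t)\le-\mu\phi_*c<0 \qquad\text{for all } t\ge 0.
\]
Integrating yields $w(t)\to-\infty$, contradicting $w\ge L>0$. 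Therefore $L=0$. Consequently $R(t)/R_*(t)\to1$, and since $R_*\le M$ is bounded, $|R(t)-R_*(t)|=R_*(t)\,|e^{w(t)}-1|\to0$. This proves \eqref{lim}.

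The only delicate point is obtaining a uniform positive gap in $f_0$, which requires keeping the arguments of $f_0$ in a compact subset of $(0,\infty)$. This holds because $R_*$ has the positive lower bound $m$ and the ratio $R/R_*$ is squeezed between $e^{L}$ and $e^{w(0)}$. In the case $R<R_*$ the same argument applies with $w<0$ increasing to $L\le0$, now using the compact range $[e^{w(0)}m,M]$ for $R$.
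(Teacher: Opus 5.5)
Your proposal is correct and follows the paper's proof essentially step by step. Both use the log-ratio $w=\ln(R/R_*)$, fix the sign of $R-R_*$ via uniqueness, let $w$ converge monotonically to a limit, and reach a contradiction from a uniform bound on $w'$ obtained by compactness of the range of $R_*$. The differences are cosmetic. You minimize over the two-parameter set $[m,M]\times[L,w(0)]$, while the paper uses monotonicity of $f_0$ to freeze the ratio at $e^{l/2}$ and minimizes a one-variable function on $[R_{\min},R_{\max}]$. You also use $w\ge L$ for all $t$ rather than $w\ge l/2$ for large $t$.
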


\begin{proof}
From Theorem \ref{thm:periodic}, equation \eqref{sys1} admits a unique positive $T$-periodic solution $R_*(t)$. Let
\[
R_{\min} = \min\limits_{t\in[0,T]}R_*(t),\qquad R_{\max} = \max\limits_{t\in [0,T]}R_*(t).
\]
Since $R_*(t)$ is positive and $T$-periodic, we have
\[
0< R_{\min} \le R_*(t) \le R_{\max} \qquad \text{for all } t \ge 0.
\]

    Similar to the proof of Theorem \ref{thm:periodic}, we define
    \[
    w(t):=\ln \frac{R(t)}{R_*(t)},
    \]
    then a direct calculation implies
    \begin{equation*}
        \begin{split}
            w'(t) &= \frac{R'(t)}{R(t)} - \frac{R_*'(t)}{R_*(t)}\\
            &= \mu \phi(t) \left( f_0(R(t)) - f_0(R_*(t)) \right).
        \end{split}
    \end{equation*}

    If there exists $t_*>0$ such that
    \[
    R(t_*) = R_*(t_*),
    \]
    then, by the uniqueness of solutions to the initial value problem in Theorem \ref{thm:pos}, we have
    \[
    R(t) \equiv R_*(t) \qquad \text{for all }t>t_*,
    \]
    and \eqref{lim} follows immediately.

    Therefore, we may assume that
    \[
    R(t)\neq R_*(t)\qquad \text{for all } t\ge 0.
    \]
    By continuity, either
    \[
    R(t)>R_*(t)\qquad \text{for all } t\ge 0,
    \]
    or
    \[
    R(t)<R_*(t)\qquad \text{for all } t\ge 0.
    \]
    We only consider the first case, since the second one can be treated similarly.

    Assume that 
    \[
    R(t)>R_*(t)\qquad \text{for all } t\ge 0.
    \]
    Then,
    \[
    w(t) > 0, \qquad w'(t) < 0,
    \]
    so $w$ is decreasing and has a limit
    \[
    l := \lim\limits_{t\rightarrow \infty}w(t) \ge 0.
    \]

    We claim that $l=0$. Suppose, on the contrary, that $l>0$. Then, for all
    sufficiently large $t$,
    \[
    w(t) \ge l/2,
    \]
    and hence,
    \[
    R(t) \ge R_*(t) e^{l/2}.
    \]
    Since $f_0$ is strictly decreasing, it follows that
\[
f_0(R(t))-f_0(R_*(t))
\le
f_0\bigl(e^{l/2}R_*(t)\bigr)-f_0(R_*(t)) \qquad \text{for all sufficiently large }t.
\]
Now define
\[
g(x):=f_0(e^{l/2}x)-f_0(x),\qquad x\in [R_{\min},R_{\max}].
\]
Clearly, $g$ is continuous. Moreover, since $e^{l/2}>1$ and $f_0$ is strictly decreasing, 
\[
g(x)<0\qquad \text{for all } x\in [R_{\min},R_{\max}].
\]
By compactness,
there exists $c_0>0$ such that
\[
g(x)\le -c_0\qquad \text{for all } x\in [R_{\min},R_{\max}].
\]
Therefore, for all sufficiently large $t$,
\[
f_0(R(t))-f_0(R_*(t))\le -c_0.
\]
Using the positivity of $\phi(t)$, we obtain
\[
w'(t)
=
\mu\phi(t)\bigl(f_0(R(t))-f_0(R_*(t))\bigr)
\le -\mu\phi_*c_0<0 \qquad \text{for all sufficiently large $t$.}
\]
Integrating this inequality yields
\[
w(t)\to -\infty \qquad \text{as } t\to\infty,
\]
which contradicts the facts that $w(t)>0$ and $w(t)\rightarrow l>0$. Thus, $l=0$.

Therefore, we have
\[
\lim_{t\to\infty}w(t)=0,
\]
that is,
\[
\lim_{t\to\infty}\ln\frac{R(t)}{R_*(t)}=0.
\]
Since $R_*(t)$ is bounded above and below by positive constants, it follows that
\[
\lim_{t\to\infty}|R(t)-R_*(t)|=0.
\]  
\end{proof}

\section{Linear Stability of the Radially Symmetric Periodic Solution}
From the previous section, we know that if $\bar{\phi}>\bar{\psi}$, then
\eqref{seqn1}--\eqref{seqn5} admits a unique radially symmetric positive
$T$-periodic solution $R_*(t)$. Substituting $R_*(t)$ into \eqref{sol:u} and
solving \eqref{eqn2.3}--\eqref{eqn2.4}, we obtain the corresponding periodic
nutrient concentration $u_*(r,t)$ and pressure $p_*(r,t)$, i.e.,
\begin{equation}
    \label{star}
    u_*(r,t) = \frac{\alpha \phi(t)}{I_1(R_*(t))+\alpha I_0(R_*(t))}I_0(r), \qquad p_*(r,t) = \frac{\mu}{4}\psi(t) r^2 - \mu u_*(r,t) + C_1(t),
\end{equation}
where
\[
C_1(t) = \frac{1}{R_*(t)} + \mu \frac{\alpha \phi(t) I_0(R_*(t))}{I_1(R_*(t)) + \alpha I_0(R_*(t))} -\frac{\mu}{4}\psi(t) R_*^2(t).
\]
By \eqref{eqn2.5} and direct computations, we also have
\begin{equation}
    \label{star:p}
    \frac{\p p_*}{\p r}(R_*(t),t) = -R_*'(t).
\end{equation}
\begin{equation}
    \label{star:p2}
    \begin{split}
        \frac{\p^2 p_*}{\p r^2}(R_*(t),t) &= \frac{\mu}2 \psi(t) - \mu \frac{\alpha \phi(t)}{I_1(R_*(t))+\alpha I_0(R_*(t))}\left(I_2(R_*(t))+\frac{1}{R_*(t)}I_1(R_*(t))\right)\\
        &= -\mu\left(f_0(R_*(t))\phi(t) - \frac{\psi(t)}{2}\right) - \mu \frac{\alpha \phi(t) I_2(R_*(t))}{I_1(R_*(t))+\alpha I_0(R_*(t))}\\
        &= -\frac{R_*'(t)}{R_*(t)} - \mu \frac{\alpha \phi(t) I_2(R_*(t))}{I_1(R_*(t))+\alpha I_0(R_*(t))},
    \end{split}
\end{equation}
where we have used the properties of modified Bessel functions listed in Appendix \ref{app1}, together with \eqref{f0} and \eqref{sys1}. These boundary derivative identities will be used below. In what follows,
we denote this radially symmetric $T$-periodic solution by $(u_*(r,t),\,p_*(r,t),\,R_*(t))$. 

The radial dynamics of this periodic solution have already been characterized
in Theorem \ref{thm:persist}. We now turn to its stability with respect to non-radially symmetric perturbations. This is natural from both the modeling and analytical points of view, since perturbations of a free boundary are not necessarily radially symmetric. More precisely, we shall consider the linearized system of \eqref{seqn1}--\eqref{seqn5} around $(u_*(r,t),\,p_*(r,t),\,R_*(t))$ and prove that this periodic solution is
linearly stable when $\mu$ is small.

Assume that the initial conditions are perturbed as follows:
\[
\p \Omega(0): \, r=R_*(0)+\varepsilon \rho_0(\theta), \qquad u\big|_{t=0} = u_*(r,0) + \varepsilon w_0(r,\theta).
\]
We seek a solution in the form
\begin{eqnarray*}
    \displaystyle \p \Omega(t): r=R_*(t) + \varepsilon \rho(\theta,t)+O(\varepsilon^2),\\
    \displaystyle u(r,\theta,t) = u_*(r,t) + \varepsilon w(r,\theta,t) + O(\varepsilon^2),\\
    \displaystyle p(r,\theta,t) = p_*(r,t) + \varepsilon q(r,\theta,t) + O(\varepsilon^2).
\end{eqnarray*}
Substituting these expansions into the system \eqref{seqn1}--\eqref{seqn5}, and using the expansions from \cite{friedman2001nonlinear},
\[
V_n = R_*'(t)+\varepsilon \rho_t (\theta,t) + O(\varepsilon^2),\qquad \kappa = \frac{1}{R_*(t)}-\frac{\varepsilon}{R_*^2(t)}\left(\rho(\theta,t) + \rho_{\theta\theta}(\theta,t)\right)+ O(\varepsilon^2),
\]
we collect the terms of order $\varepsilon$ and obtain the linearized system around $(u_*(r,t),\,p_*(r,t),\,R_*(t))$. The linearized problem takes the following form:
\begin{eqnarray}
    &\displaystyle \label{l1} \Delta w = w \qquad \text{in } B_{R_*(t)} \times \{t>0\},\\
    &\displaystyle \label{l2} \frac{\p w}{\p r} + \alpha w = -\left(\frac{\p^2 u_*}{\p r^2}(R_*(t),t) + \alpha \frac{\p u_*}{\p r}(R_*(t),t) \right) \rho(\theta,t) \qquad \text{on } \p B_{R_*(t)} \times \{t>0\},\\
    &\displaystyle \label{l3} -\Delta q = \mu w \qquad \text{in } B_{R_*(t)} \times \{t>0\},\\
    &\displaystyle \label{l4} q = -\frac{1}{R_*^2(t)}\left(\rho(\theta,t) + \rho_{\theta\theta}(\theta,t)\right) - \frac{\p p_*}{\p r}(R_*(t),t) \rho(\theta,t) \qquad \text{on } \p B_{R_*(t)} \times \{t>0\},\\
    &\displaystyle \label{l5} \rho_t = -\frac{\p^2 p_*}{\p r^2}(R_*(t),t) \rho(\theta,t) - \frac{\p q}{\p r}(R_*(t),\theta,t) \qquad t>0.
\end{eqnarray}
For notational simplicity, define
\begin{equation}
    \beta(t) := \frac{\p^2 u_*}{\p r^2}(R_*(t),t) + \alpha \frac{\p u_*}{\p r}(R_*(t),t).
\end{equation}
To analyze the linearized system, we expand the perturbations in angular Fourier modes. For each integer $n\ge 0$, we seek solutions of the form
\begin{eqnarray*}
    &w(r,\theta,t) = w_n(r,t)\cos(n\theta),\\
    &q(r,\theta,t) = q_n(r,t)\cos(n\theta),\\
    &\rho(\theta,t) = \rho_n(t)\cos(n\theta).
\end{eqnarray*}
Similarly, one may consider solutions of the form
\begin{eqnarray*}
    &w(r,\theta,t) = w_n(r,t)\sin(n\theta),\\
    &q(r,\theta,t) = q_n(r,t)\sin(n\theta),\\
    &\rho(\theta,t) = \rho_n(t)\sin(n\theta),
\end{eqnarray*}
for each integer $n\ge 1$. Since the cosine and sine modes satisfy the same radial equations, it suffices to consider the cosine modes below.

Using the fact $\Delta = \partial_{rr} + \frac{1}{r}\partial_r + \frac{1}{r^2}\partial_{\theta\theta}$, we obtain
\begin{eqnarray}
    &\displaystyle \label{lf1} \frac{\p^2 w_n}{\p r^2}(r,t) + \frac1r \frac{\p w_n}{\p r}(r,t) -\left(\frac{n^2}{r^2} + 1\right) w_n(r,t) = 0 \qquad 0<r<R_*(t), \,t>0,\\
    &\displaystyle \label{lf2} \frac{\p w_n}{\p r}(R_*(t),t) + \alpha w_n(R_*(t),t) = - \beta(t) \rho_n(t) \qquad t>0,\\
    &\displaystyle \label{lf3} \frac{\p^2 q_n}{\p r^2}(r,t) + \frac1r \frac{\p q_n}{\p r}(r,t) - \frac{n^2}{r^2} q_n(r,t) = -\mu w_n(r,t) \qquad 0<r<R_*(t), \,t>0,\\
    &\displaystyle \label{lf4} q_n(R_*(t),t) = \frac{n^2-1}{R_*^2(t)}\rho_n(t) - \frac{\p p_*}{\p r}(R_*(t),t) \rho_n(t) \qquad t>0,\\
    &\displaystyle \label{lf5} \frac{\dif \rho_n(t)}{\dif t} = -\frac{\p^2 p_*}{\p r^2}(R_*(t),t) \rho_n(t) - \frac{\p q_n}{\p r}(R_*(t),t) \qquad t>0.
\end{eqnarray}

We collect some preliminary properties of modified Bessel functions in Appendix \ref{app1}. Using those properties, together with regularity at $r=0$, the solution of \eqref{lf1} has the form
\[
w(r,t) = C_2(t) I_n(r).
\]
Applying the boundary condition \eqref{lf2}, we obtain
\begin{equation}\label{sol:wn}
    w_n(r,t) =-\frac{\beta(t)\rho_n(t)I_n(r)}{I_n'(R_*(t))+\alpha I_n(R_*(t))}.
\end{equation}
To solve for $q_n$, we define $l_n(r,t):=q_n(r,t)+\mu w_n(r,t)$. Since $w_n$ satisfies \eqref{lf1} and $q_n$ satisfies \eqref{lf3}, it follows that $l_n$ satisfies
\[
\frac{\p^2 l_n}{\p r^2}(r,t) + \frac{1}{r}\frac{\p l_n}{\p r}(r,t) - \frac{n^2}{r^2} l_n(r,t) = 0,
\]
The regular solution at $r=0$ is therefore
\[
l_n(r,t)=C_3(t)r^n.
\]
Using the boundary conditions \eqref{lf2} and \eqref{lf4}, together with \eqref{star:p}, we find
\begin{equation}
    \begin{split}
        C_3(t) &= \frac{1}{R_*^n(t)}\left( \frac{n^2-1}{R_*^2(t)}\rho_n(t) - \frac{\p p_*}{\p r}(R_*(t),t)\rho_n(t) + \mu w_n(R_*(t),t)\right)\\
        &= \frac{1}{R_*^n(t)}\left( \frac{n^2-1}{R_*^2(t)}\rho_n(t) + R_*'(t)\rho_n(t) + \mu w_n(R_*(t),t)\right).
    \end{split}
\end{equation}
Thus,
\begin{equation}
    \label{sol:qn}
    q_n(r,t) = \frac{r^n}{R_*^n(t)}\left( \frac{n^2-1}{R_*^2(t)}\rho_n(t) + R_*'(t)\rho_n(t) + \mu w_n(R_*(t),t)\right) - \mu w_n(r,t).
\end{equation}

Substituting \eqref{sol:wn} and \eqref{sol:qn} into \eqref{lf5}, and using the information about the radially-symmetric solution in \eqref{star} -- \eqref{star:p2}, we obtain the governing equation for $\rho_n(t)$:
\begin{equation*}
\begin{split}
    \frac{\dif \rho_n(t)}{\dif t} =& - \frac{n}{R_*(t)}\left(\frac{n^2-1}{R_*^2(t)}\rho_n(t) + R_*'(t)\rho_n(t)+ \mu w_n(R_*(t),t)\right) + \mu \frac{\p w_n}{\p r}(R_*(t),t) + \frac{R_*'(t)}{R_*(t)}\rho_n(t)\\
    &+ \mu \frac{\alpha \phi(t) I_2(R_*(t))}{I_1(R_*(t))+\alpha I_0(R_*(t))} \rho_n(t)\\
    =& -\bigg[\frac{n(n^2-1)}{R_*^3(t)} + \frac{(n-1)R_*'(t)}{R_*(t)} + \frac{\mu \beta(t)I_{n+1}(R_*(t))}{I_n'(R_*(t))+\alpha I_n(R_*(t))} -  \frac{\mu \alpha \phi(t) I_2(R_*(t))}{I_1(R_*(t))+\alpha I_0(R_*(t))}\bigg] \rho_n(t)\\
    =& -\bigg[\frac{n(n^2-1)}{R_*^3(t)} + \frac{(n-1)R_*'(t)}{R_*(t)} +  \mu \bigg(\frac{I_{n+1}(R_*(t))\big( I_1'(R_*(t))+\alpha I_1(R_*(t))\big)}{I_n'(R_*(t))+\alpha I_n(R_*(t))} - I_2(R_*(t)) \bigg)\\ 
    &\frac{\alpha \phi(t)}{I_1(R_*(t))+\alpha I_0(R_*(t))} \bigg] \rho_n(t)\\
    :=& -A_n(t,\mu) \rho_n(t),
\end{split}
\end{equation*}
where
\begin{equation}\label{An}
    \begin{split}
        A_n(t,\mu) =&\; \mu \bigg(\frac{I_{n+1}(R_*(t))\big( I_1'(R_*(t))+\alpha I_1(R_*(t))\big)}{I_n'(R_*(t))+\alpha I_n(R_*(t))} - I_2(R_*(t)) \bigg)\frac{\alpha \phi(t)}{I_1(R_*(t))+\alpha I_0(R_*(t))}\\
        &+ \frac{n(n^2-1)}{R_*^3(t)} + \frac{(n-1)R_*'(t)}{R_*(t)}.
    \end{split}
\end{equation}
Since $\phi(t)$ and $R_*(t)$ are $T$-periodic, and hence $R_*'(t)$ is also
$T$-periodic, it follows that $A_n(t,\mu)$ is $T$-periodic in $t$. Then, $\rho_n(t)$ satisfies a linear differential equation with a periodic coefficient. We shall use the following lemma to study this equation.

\begin{lem}\label{lem:periodic}
    Assume that
    \begin{equation*}
        \frac{\dif \rho}{\dif t} = -A(t)\rho,
    \end{equation*}
    where $A(t)$ is a continuous $T$-periodic function. Then
\begin{equation}\label{sol-f}
    \rho(t)=\rho(0)e^{-\int_0^t A(s)\,\dif s}.
\end{equation}
In particular, 
\begin{itemize}
   \item if $\int_0^T A(t)\,\dif t>0$, then $\rho(t) \rightarrow 0$ exponentially as $t \rightarrow \infty$;
    \item if $\int_0^T A(t)\,\dif t<0$, then $|\rho(t)|\rightarrow \infty$ exponentially as $t\rightarrow \infty$ for every
$\rho(0)\neq0$.
\end{itemize}
\end{lem}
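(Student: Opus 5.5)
We first obtain the explicit formula \eqref{sol-f} and then read off the asymptotics from periodicity of $A$. Since $A$ is continuous, the equation $\rho'=-A(t)\rho$ is a scalar linear ODE with continuous coefficient. Multiplying by the integrating factor $e^{\int_0^t A(s)\dif s}$ gives
\[
\frac{\dif}{\dif t}\Bigl(\rho(t)e^{\int_0^t A(s)\dif s}\Bigr)=0 .
\]
Hence the bracket is constant and equals $\rho(0)$, which is \eqref{sol-f}. Uniqueness of this solution follows from Picard--Lindel\"of, since the right-hand side is Lipschitz in $\rho$.

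For the asymptotics, set $\bar A:=\frac1T\int_0^T A(t)\dif t$. Write $t=nT+\tau$ with $n\in\mathbb N$ and $\tau\in[0,T)$. By $T$-periodicity,
\[
\int_0^t A(s)\dif s = nT\bar A+\int_0^\tau A(s)\dif s .
\]
Let $K:=\max_{\tau\in[0,T]}\bigl|\int_0^\tau A(s)\dif s\bigr|$, which is finite because $A$ is continuous. Then
\[
\Bigl|\int_0^t A(s)\dif s - \bar A\,t\Bigr| \le |\bar A|T + K =: K'
\]
for all $t\ge0$.

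If $\int_0^TA>0$, then $\bar A>0$ and
\[
|\rho(t)|\le |\rho(0)|e^{K'}e^{-\bar A t},
\]
so $\rho$ decays exponentially with rate $\bar A$. If $\int_0^TA<0$ and $\rho(0)\ne0$, then
\[
|\rho(t)|\ge |\rho(0)|e^{-K'}e^{|\bar A|t}\to\infty
\]
exponentially.

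There is no real obstacle here. The only point requiring care is the uniform bound on the remainder $\int_0^\tau A$ over one period, which is what turns the period-by-period growth factor $e^{-T\bar A}$ into a genuine exponential rate for all $t$, not only along the sequence $t=nT$.
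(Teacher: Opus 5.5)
Your proof is correct and follows essentially the same route as the paper. Both arguments write $t=nT+\tau$, use periodicity to reduce $\int_0^t A$ to $n\int_0^T A$ plus a remainder that is uniformly bounded over one period, and then read off exponential decay or growth. The differences are cosmetic: you derive \eqref{sol-f} via an integrating factor, whereas the paper simply states it, and you package the remainder as $\bigl|\int_0^t A-\bar A\,t\bigr|\le K'$, whereas the paper bounds $e^{-\int_0^\tau A}$ between two positive constants and uses $n\ge t/T-1$.
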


\begin{proof}
    Let $\Gamma := \int_0^T A(t)\dif t$. For $t>0$ large enough, there exists $m\in \mathbb{N}$ such that $t=mT +\tau$ with $\tau \in [0,T)$. Using the $T$-priodicity of $A$, we have
    \[
    \int_0^t A(s)\dif s = \int_0^{mT} A(s)\dif s + \int_{mT}^{mT+\tau} A(s) \dif s = m\Gamma + \int_0^\tau A(s) \dif s.
    \]
    Substituting into the solution formula, we obtain
    \[
    |\rho(t)| = |\rho(0)| e^{-m\Gamma} e^{-\int_0^\tau A(s)\dif s}. 
    \]
    Since $A$ is continuous on $[0,T]$, the quantity $\int_0^\tau A(s)\dif s$ is bounded uniformly for $\tau \in [0,T]$. Hence, there exists $C_4, C_5>0$ such that
    \[
    0< C_4 \le e^{-\int_0^\tau A(s)\dif s} \le  C_5 \qquad \text{for all }\tau \in [0,T].
    \]

If $\Gamma >0$, then
\[
|\rho(t)|\le C_5 |\rho(0)| e^{-m\Gamma} \le C_5|\rho(0)| e^{-(t/T-1)\Gamma}.
\]
Hence, $\rho(t)\rightarrow 0$ exponentially as $t\rightarrow \infty$.

If $\Gamma < 0$ and $\rho(0)\neq 0$, then
\[
|\rho(t)| \ge C_4 |\rho(0)| e^{-m\Gamma} \ge C_4 |\rho(0)| e^{-(t/T-1)\Gamma} \rightarrow \infty,
\]
as $t\rightarrow \infty$.
\end{proof}

We now analyze the stability of each Fourier mode. Since the modes $n=0$ and $n=1$ have special geometric meanings, we treat them separately: the mode $n=0$ corresponds to radial perturbations, while the mode $n=1$
corresponds to translations of the tumor center. The modes $n\ge2$ represent nonradial shape perturbations. Thus we divide our discussion into three cases: (i) $n=0$, (ii) $n=1$, and (iii) $n\ge2$.

\noindent \underline{\bf Case (i) $n=0$.} When $n=0$, $A_0$ defined in \eqref{An} reduced to
\[
A_0(t,\mu) = \mu\bigg( \frac{I_1(R_*(t))\big(I_1'(R_*(t))+\alpha I_1(R_*(t))\big)}{I_0'(R_*(t))+\alpha I_0(R_*(t))} - I_2(R_*(t))\bigg) \frac{\alpha \phi(t)}{I_1(R_*(t))+\alpha I_0(R_*(t))} - \frac{R_*'(t)}{R_*(t)}
\]
By Lemma \ref{lem:periodic}, the long-time behavior of $\rho_0(t)$ is determined
by the sign of the integral of $A_0(t,\mu)$ over one period. Since $R_*(t)$ is
$T$-periodic,
\begin{equation}
    \label{int-Rstar}
    \int_0^T \frac{R_*'(t)}{R_*(t)} \dif t = \ln R_*(T) - \ln R_*(0) = 0.
\end{equation}
Thus, 
\begin{equation}\label{eqn:A0}
    \begin{split}
        \int_0^T A_0(t,\mu)\dif t  &= \int_0^T \mu\bigg( \frac{I_1(R_*(t))\big(I_1'(R_*(t))+\alpha I_1(R_*(t))\big)}{I_0'(R_*(t))+\alpha I_0(R_*(t))} - I_2(R_*(t))\bigg) \frac{\alpha \phi(t)}{I_1(R_*(t))+\alpha I_0(R_*(t))} \dif t\\
        &= \int_0^T \mu\bigg( \frac{I_1'(R_*(t))+\alpha I_1(R_*(t))}{I_0'(R_*(t))+\alpha I_0(R_*(t))} - \frac{I_2(R_*(t))}{I_1(R_*(t))}\bigg) R_*(t)f_0(R_*(t))\phi(t) \dif t,
    \end{split}
\end{equation}
where we used $f_0$ defined in \eqref{f0} for notational simplification. 
Since $R_*(t)>0$, $f_0(R_*(t))>0$, and $\phi(t)>0$, the sign of
$\int_0^T A_0(t,\mu)\dif t$ is determined by the sign of
\begin{equation*}
    \frac{I_1'(R_*(t))+\alpha I_1(R_*(t))}{I_0'(R_*(t))+\alpha I_0(R_*(t))} - \frac{I_2(R_*(t))}{I_1(R_*(t))}.
\end{equation*}

\begin{lem}\label{lem:case0}
    For $n=0$ and any $\mu >0$,
    \begin{equation}
        \label{case0}
        \rho_0(t) \rightarrow 0 \qquad \text{exponentially as } t\rightarrow \infty.
    \end{equation}
\end{lem}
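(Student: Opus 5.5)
By Lemma \ref{lem:periodic}, it suffices to show that $\int_0^T A_0(t,\mu)\dif t>0$ for every $\mu>0$. From \eqref{eqn:A0}, the factors $\mu$, $R_*(t)$, $f_0(R_*(t))$ and $\phi(t)$ are all strictly positive. So I will prove the pointwise inequality
\[
\frac{I_1'(r)+\alpha I_1(r)}{I_0'(r)+\alpha I_0(r)} > \frac{I_2(r)}{I_1(r)} \qquad \text{for all } r>0,
\]
and then apply it at $r=R_*(t)$. The integrand in \eqref{eqn:A0} is then continuous and strictly positive on $[0,T]$, so its integral is positive. The first bullet of Lemma \ref{lem:periodic} gives exponential decay of $\rho_0$, with no restriction on $\mu$.

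To prove the inequality, I first rewrite everything in terms of $I_0$ and $I_1$ using the standard identities (Appendix \ref{app1}):
\[
I_0'=I_1,\qquad I_1'=I_0-\frac{I_1}{r},\qquad I_2=I_0-\frac{2}{r}I_1.
\]
Both denominators $I_1+\alpha I_0$ and $I_1$ are positive for $r>0$, so I cross-multiply. The claim becomes the positivity of
\[
I_1\Big(I_0-\frac{I_1}{r}+\alpha I_1\Big)-\Big(I_0-\frac{2}{r}I_1\Big)\big(I_1+\alpha I_0\big).
\]
Expanding, the $I_0I_1$ terms cancel, and the expression reduces to
\[
\frac{I_1^2}{r}+\alpha\Big(I_1^2-I_0^2+\frac{2}{r}I_0I_1\Big)=\frac{I_1^2}{r}+\alpha\big(I_1^2-I_0I_2\big).
\]
The first term is positive for $r>0$. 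For the second term I invoke the Turán-type inequality for modified Bessel functions, $I_1^2(r)>I_0(r)I_2(r)$ for $r>0$, a classical result which I would add to Appendix \ref{app1} with a reference. Since $\alpha>0$, the whole expression is then strictly positive.

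The main obstacle is justifying the Turán inequality if a citation is not acceptable. In that case I would try first the Cauchy--Schwarz route. Using $I_n(r)=\frac1\pi\int_0^\pi e^{r\cos\theta}\cos(n\theta)\dif\theta$ and $\cos 2\theta=2\cos^2\theta-1$, the inequality $I_1^2>I_0I_2$ becomes $2I_1^2>I_0(2\tilde I-I_0)$, where $\tilde I=\frac1\pi\int_0^\pi e^{r\cos\theta}\cos^2\theta\dif\theta$. This can be attacked with a weighted variance argument. If that fails, the fallback is the power-series comparison of $I_1^2$ and $I_0I_2$ coefficient by coefficient.
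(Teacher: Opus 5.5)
Your proof is correct and is essentially the paper's. The paper applies Lemma \ref{besselineq2} with $n=1$, and that proof, after using \eqref{bessel4}, reduces to exactly your inequality $\bigl(\frac{1}{x}+\alpha\bigr)I_1^2(x)>\alpha I_0(x)I_2(x)$; the Turán inequality $I_1^2>I_0I_2$ you planned to cite is already Lemma \ref{besselineq1} with $n=1$, proved in the appendix by your fallback of comparing power-series coefficients, so no external citation or Cauchy--Schwarz argument is needed.
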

\begin{proof}
    Since $R_*(t)>0$, we have $I_n(R_*(t))>0$ and $I_n'(R_*(t))>0$. Applying Lemma \ref{besselineq2} with $n=1$ gives
    \[
    \frac{I_2(R_*(t))}{I_1'(R_*(t))+\alpha I_1(R_*(t))} < \frac{I_1(R_*(t))}{I_0'(R_*(t))+\alpha I_0(R_*(t))}.
    \]
    Since all denominators are positive, this is equivalent to
    \[
    \frac{I_1'(R_*(t))+\alpha I_1(R_*(t))}{I_0'(R_*(t))+\alpha I_0(R_*(t))} > \frac{I_2(R_*(t))}{I_1(R_*(t))}.
    \]
    Therefore, the integrand in \eqref{eqn:A0} is positive for all $\mu>0$ and $t \in [0,T]$. Hence
    \[
    \int_0^T A_0(t,\mu)\dif t > 0 
    \]
    By Lemma \ref{lem:periodic}, it follows that $\rho_0(t)\rightarrow 0$ exponentially as $t\rightarrow \infty$.

\end{proof}

\begin{remark}
    For $n=0$, 
    \[
    r= R_*(t) + \varepsilon \rho_0(t)\cos(0) = R_*(t) + \varepsilon \rho_0(t),
    \]
    which indicates that the perturbation is a radial perturbation. The result in Lemma \ref{lem:case0} is just another indication the results in Theorem \ref{thm:persist}.
\end{remark}

\noindent \underline{\bf Case (ii) $n=1$.} 
\begin{lem}\label{lem:case1}
    For $n=1$ and any $\mu>0$, we have
    \begin{equation}
        \label{case1}
         \rho_1(t) = \rho_1(0).
    \end{equation}
\end{lem}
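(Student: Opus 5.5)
We show that $A_1(t,\mu)\equiv 0$ for every $t$ and every $\mu>0$. Then the linear equation $\rho_1'=-A_1(t,\mu)\rho_1$ forces $\rho_1(t)=\rho_1(0)$, directly or by the solution formula \eqref{sol-f} of Lemma \ref{lem:periodic} with $A\equiv 0$.

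Setting $n=1$ in \eqref{An}, the geometric terms vanish identically: $n(n^2-1)/R_*^3=0$ and $(n-1)R_*'/R_*=0$. It remains to show that the $\mu$-term vanishes, i.e.
\[
\frac{I_{2}(R_*)\bigl(I_1'(R_*)+\alpha I_1(R_*)\bigr)}{I_1'(R_*)+\alpha I_1(R_*)} - I_2(R_*) = 0 .
\]
This is immediate once we cancel the common factor $I_1'(R_*)+\alpha I_1(R_*)$. Cancellation is legitimate because $R_*(t)>0$ implies $I_1(R_*)>0$ and $I_1'(R_*)>0$ (Appendix \ref{app1}), and $\alpha>0$, so the factor is strictly positive.

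So the bracket in $A_1$ is exactly zero, $A_1(t,\mu)\equiv 0$, and $\rho_1(t)=\rho_1(0)$ for all $t\ge0$ and all $\mu>0$.

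The only point needing care is that this cancellation is consistent with the derivation of $A_n$. That derivation rewrote $\beta(t)$ as $\frac{\alpha\phi(t)}{I_1(R_*)+\alpha I_0(R_*)}\bigl(I_1'(R_*)+\alpha I_1(R_*)\bigr)$, using $u_*$ from \eqref{star} and $I_0'=I_1$. For $n=1$, $\beta$'s factor $I_1'+\alpha I_1$ cancels against the denominator $I_n'+\alpha I_n$. This matches the geometric interpretation: a mode-one perturbation is, to first order, a translation of the disk, which is neutral for the translation-invariant problem. Beyond this consistency check there is no real obstacle.
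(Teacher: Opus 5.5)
Your proposal is correct and follows the paper's proof. Setting $n=1$ in \eqref{An} kills the geometric terms and reduces the $\mu$-bracket to $I_2(R_*)-I_2(R_*)=0$, so $A_1\equiv 0$ and $\rho_1$ is constant; your check that $I_1'(R_*)+\alpha I_1(R_*)>0$ justifies the cancellation, which the paper leaves implicit.
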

\begin{proof}
    It follows from \eqref{An} that
    \begin{equation*}
    \begin{split}
        A_1(t,\mu) = \mu \Big(I_2(R_*(t))- I_2(R_*(t))\Big)\frac{\alpha \phi(t)}{I_1(R_*(t))+\alpha I_0(R_*(t))} = 0.
    \end{split}
\end{equation*}
Thus, the governing equation for $\rho_1(t)$ reduces to 
\[
\frac{\dif \rho_1(t)}{\dif t} = 0,
\]
which immediately implies \eqref{case1}. 
\end{proof}

\begin{remark}
    For $n=1$,
    \[
    r= R_*(t) + \varepsilon \rho_1(t) \cos(\theta) = R_*(t) + \varepsilon \rho_1(0) \cos(\theta).
    \]
    This corresponds to a translation of the tumor center, and the magnitude
of the $n=1$ perturbation is invariant in time.
\end{remark}

\noindent \underline{\bf Case (iii) $n\ge 2$.} By Lemma \ref{lem:periodic}, the stability of the $n$-th Fourier mode is
determined by the sign of
\begin{equation}
    \label{eqn:G}
    \Gamma_n(\mu):=\int_0^T A_n(t,\mu)\,\dif t.
\end{equation}
Unlike the other two cases, where the sign of $\Gamma_0(\mu)$ and $\Gamma_1(\mu)$ can be determined, here we need to solve for $\mu$ from $\Gamma_n(\mu)=0$. It should be noted that $A_n(t,\mu)$ defined in \eqref{An} depends also on the $T$-periodic solution $R_*(t)$, and $R_*(t)$ depends on $\mu$. So, to be more accurate, we write \eqref{eqn:G} as
\begin{equation}
    \label{eqn:G:a}
    \Gamma_n(\mu):=\int_0^T A_n(t,\mu, R_*(t;\mu))\,\dif t.
\end{equation}
Using the periodicity of $R_*(t)$ in \eqref{int-Rstar}, we have
\begin{equation}\label{comp:G}
    \Gamma_n(\mu) = \int_0^T \frac{n(n^2-1)}{R_*^3(t;\mu)} \dif t - \mu \int_0^T H_n(t,R_*(t;\mu))\dif t,
\end{equation}
where
\begin{equation}\label{comp:H}
    H_n(t,R):= \frac{\alpha \phi(t)}{I_1(R)+\alpha I_0(R)} \left(I_2(R) -  \frac{I_{n+1}(R)\big(I_1'(R)+\alpha I_1(R)\big)}{I_n'(R)+\alpha I_n(R)}\right).
\end{equation}
By Lemma \ref{besselineq2}, the sequence
    \[
    \frac{I_{n+1}(x)}{I_n'(x)+\alpha I_n(x)}
    \]
    is strictly decreasing in $n$ when $x>0$. Hence, for $n\ge2$ and $R>0$,
\[
\frac{I_{n+1}(R)}
{I_n'(R)+\alpha I_n(R)}
<
\frac{I_2(R)}
{I_1'(R)+\alpha I_1(R)}.
\]
Since
\[
I_1'(R)+\alpha I_1(R)>0,
\]
it follows that
\[
I_2(R)
-
\frac{
I_{n+1}(R)
\bigl(I_1'(R)+\alpha I_1(R)\bigr)}
{I_n'(R)+\alpha I_n(R)}
>0.
\]
Moreover,
\[
\frac{\alpha\phi(t)}
{I_1(R)+\alpha I_0(R)}>0.
\]
Therefore,
\[
H_n(t,R) > 0 \qquad \text{for all }t\in[0,T] \text{ and } R>0.
\]

In order to remove the dependence of $R_*(t;\mu)$ on $\mu$, we impose the
stronger assumption
\begin{equation}
    \label{strong-cond}
    \phi_* > \psi^*.
\end{equation}
Then, by \eqref{phi} and \eqref{psi},
\[
\psi_* \le  \bar{\psi} \le \psi^* < \phi_* \le \bar{\phi} \le  \phi^*.
\]
In addition to $R_+$ defined in \eqref{upper-sol}, we also define 
\[
R_- := f_0^{-1}\left(\frac{\psi^*}{2\phi_*}\right).
\]
By a comparison argument, analogous to that used in \eqref{upper-sol} -- \eqref{upper-sol2}, it can be proved that
\[
0< R_- \le R_*(t;\mu) \le R_+ \qquad \text{for all $t\in[0,T]$ and $\mu>0$}.
\]
Since $H_n(t,R)$ is continuous and strictly positive on the compact set $[0,T]\times[R_-,R_+]$, there exist constants $H_n^-, H_n^+>0$ such that
\begin{equation}
    \label{bound:H}
    0< H_n^- \le H_n(t,R_*(t;\mu) )\le H_n^+ \qquad \text{for all $t\in[0,T]$ and $\mu>0$}.
\end{equation}
Thus,
\begin{equation}
    \label{lb}
    \Gamma_n(\mu) \ge \left(\frac{n(n^2-1)}{R_+^3} - \mu H_n^+ \right)T > 0 \qquad \text{for sufficiently small $\mu>0$},
\end{equation}
and
\begin{equation}
    \label{ub}
    \Gamma_n(\mu) \le \left(\frac{n(n^2-1)}{R_-^3} - \mu H_n^- \right)T < 0 \qquad \text{for sufficiently large $\mu>0$}.
\end{equation}
Since $\Gamma_n(\mu)$ is continuous in $\mu$, the intermediate value theorem implies that there exists
at least one root $\mu_n>0$ such that
\[
\Gamma_n(\mu_n) = 0.
\]

Let
\begin{equation}
    \label{mu:n:s}
    \mu_n^*:=\inf\{\mu>0:\Gamma_n(\mu)=0\}.
\end{equation}
Since \(\Gamma_n(\mu)>0\) for sufficiently small \(\mu>0\), we have
\(\mu_n^*>0\). Moreover, we claim that $\mu_n^* \rightarrow \infty$ as $n\rightarrow \infty$. Indeed, using \eqref{besseln}, we have
\[
H_n(t,R) \le \frac{\alpha \phi(t)}{I_1(R)+\alpha I_0(R)} I_2(R) \le \alpha \phi(t) \frac{I_2(R)}{I_1(R)} \le \frac12 \alpha \phi(t) R,
\]
hence
\[
H_n(t,R_*(t;\mu)) \le \frac12 \alpha \phi(t) R_+.
\]
If $\mu_n$ is a positive root of $\Gamma_n(\mu)=0$, then
\[
0 = \Gamma_n(\mu_n) \ge \left(\frac{n(n^2-1)}{R_+^3} - \mu_n \frac{1}{2}\alpha \bar{\phi} R_+\right) T,
\]
which implies
\[
\mu_n \ge \frac{2n(n^2-1)}{\alpha \bar{\phi}R_+^4}.
\]
Taking the infimum over all positive roots gives
\begin{equation}
    \label{critical:ub}
    \mu_n^* \ge \frac{2n(n^2-1)}{\alpha \bar{\phi}R_+^4} \rightarrow \infty
\end{equation}
as $n\rightarrow \infty$.

Since Lemmas \ref{lem:case0} and \ref{lem:case1} hold for all $\mu>0$, we set $\mu_0^*=\mu_1^*=\infty$. Define
\begin{equation}
    \label{mu:s}
    \mu_* = \min\{\mu_0^*, \mu_1^*, \mu_2^*, \mu_3^*, \mu_4^*,\cdots\}.
\end{equation}
Since $\mu_n^*\rightarrow \infty$, the minimum is attained among finitely many
indices. Hence
\[
0<\mu_*<\infty.
\]
If $0<\mu < \mu_*$, then by the definitions of $\mu_n^*$ and $\mu_*$ in \eqref{mu:n:s} and \eqref{mu:s}, we obtain 
\[
\Gamma_n(\mu) = \int_0^T A_n(t,\mu) \dif t  > 0 \qquad \text{for all }n\ge 2.
\]
Together with the cases when $n=0$ and $n=1$, we know that all Fourier modes except the translation mode $n=1$ decay exponentially as $t\rightarrow \infty$. Thus, the $T$-periodic radially symmetric solution $(u_*(r,t),p_*(r,t),R_*(t))$ is linearly stable.

\begin{remark}\label{rm:bound}
    Based on \eqref{critical:ub}, we may obtain an explicit lower bound, $\mu_* \ge \frac{12}{\alpha \bar{\phi} R_+^4}$. Consequently, a sufficient condition for linear stability of the positive
$T$-periodic solution is
\[
0<\mu<\frac{12}{\alpha\bar{\phi}R_+^4}.
\]
This condition is sufficient but not necessary.
\end{remark}

\section{Numerical Simulations}

In this section, we perform some numerical simulations to illustrate and support our theoretical findings. For the $T$-periodic coefficients $\phi(t)$ and $\psi(t)$, we choose
\begin{equation}
    \label{num:phi}
    \phi(t) = \phi_0 \big(1+a_{\phi} \sin(2\pi t/T)\big),
\end{equation}
\begin{equation}
    \label{num:psi}
    \psi(t) = \psi_0 \big(1+a_{\psi} \sin(2\pi t/T + \theta)\big),
\end{equation}
where $\phi_0,\psi_0 > 0$ and $|a_{\phi}|, |a_{\psi}| < 1$.
Clearly, both functions are continuous, strictly positive, and $T$-periodic. Moreover, a direct calculation gives
\begin{equation}
    \label{ave}
    \bar{\phi} = \phi_0, \qquad \bar{\psi} = \psi_0.
\end{equation}
Thus, by choosing $\phi_0$ and $\psi_0$, we can control the averaged nutrient supply and demand. The phase shift $\theta$ in \eqref{num:psi} allows the cycles of nutrient supply and demand to be nonsynchronized.

\subsection{Radially symmetric case}

We take
\[
a_\phi=0.5,\qquad a_\psi=0.3,\qquad \theta=\frac{\pi}{12},\qquad
T=1,\qquad \mu=3,\qquad \alpha=1.
\]
With these choices, we solve the initial value problem \eqref{sys1}--\eqref{sys2}
on the time interval $[0,50]$.

In Figure \ref{fig:vanish1}, we choose $\phi_0=3$ and $\psi_0=5$, so that $\bar{\phi} < \bar{\psi}$. In Figure \ref{fig:vanish2}, we choose $\phi_0=5$ and $\psi_0=5$, so that $\bar{\phi}=\bar{\psi}$. In both cases, the solution starts from $R_0=1.2$ and gradually converges to
zero, which is consistent with the vanishing result in Theorem \ref{dich}.

\begin{figure}[H]
\centering
\begin{subfigure}{.48\textwidth}
\centering
\includegraphics[width=\linewidth]{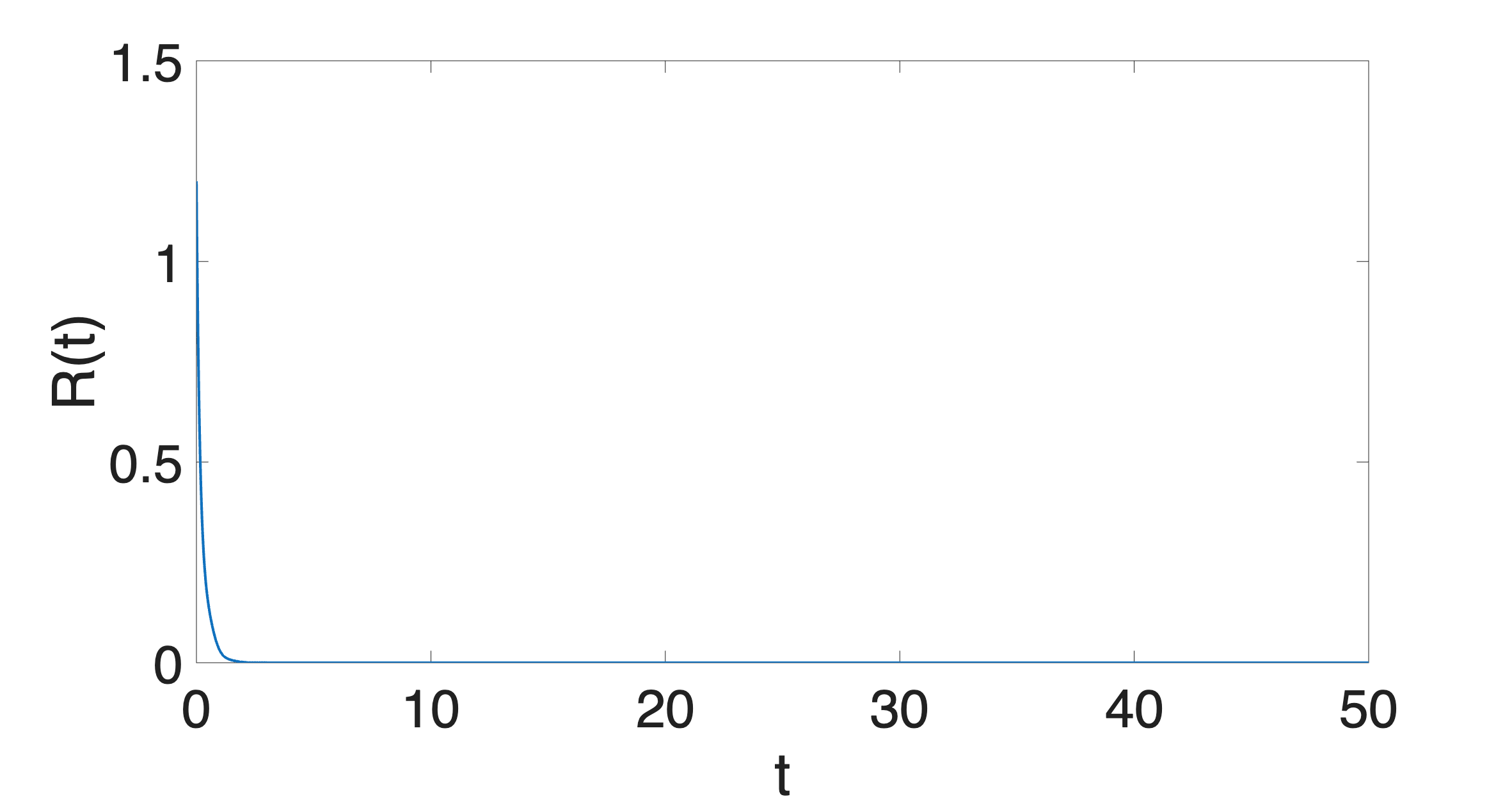}
\caption{$\phi_0=3$, $\psi_0=5$, and $R_0=1.2$.}
\label{fig:vanish1}
\end{subfigure}
\hfill
\begin{subfigure}{.48\textwidth}
\centering
\includegraphics[width=\linewidth]{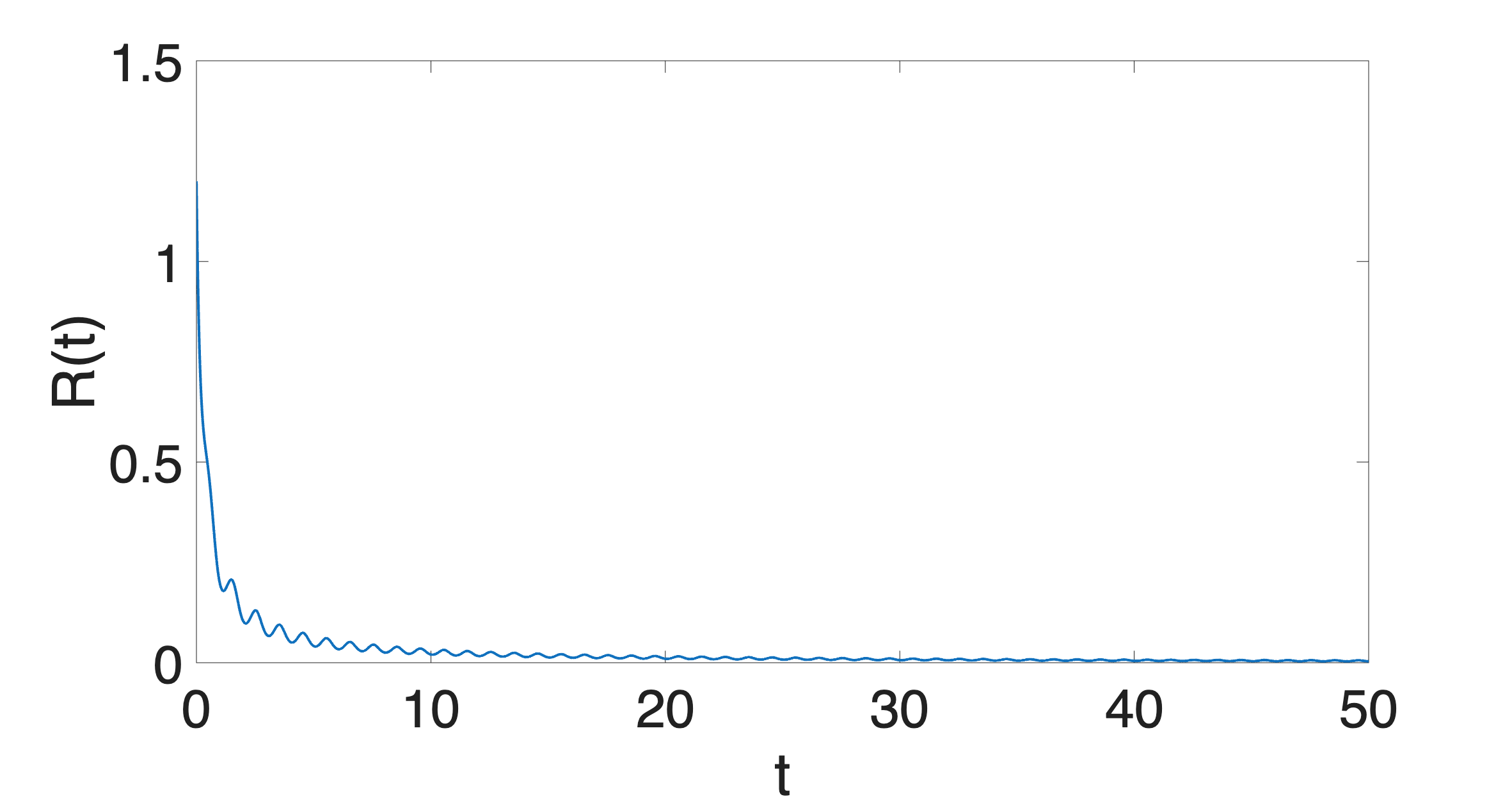}
\caption{$\phi_0=5$, $\psi_0=5$, and $R_0=1.2$.}
\label{fig:vanish2}
\end{subfigure}
\caption{Vanishing dynamics of the tumor radius when $\bar{\phi}\le \bar{\psi}$.}
\label{fig:vanishing}
\end{figure}

In Figures \ref{fig:persist1} and \ref{fig:persist2}, we choose $\phi_0=5$ and $\psi_0 = 3$, so that $\bar{\phi}>\bar{\psi}$. In Figure \ref{fig:persist1}, we take $R_0=1.2$, while in Figure \ref{fig:persist2}, we take $R_0=0.5$. In both cases, the solutions converge to the same positive periodic solution, illustrating the persistence result in Theorem \ref{dich}.

\begin{figure}[H]
\centering
\begin{subfigure}{.48\textwidth}
\centering
\includegraphics[width=\linewidth]{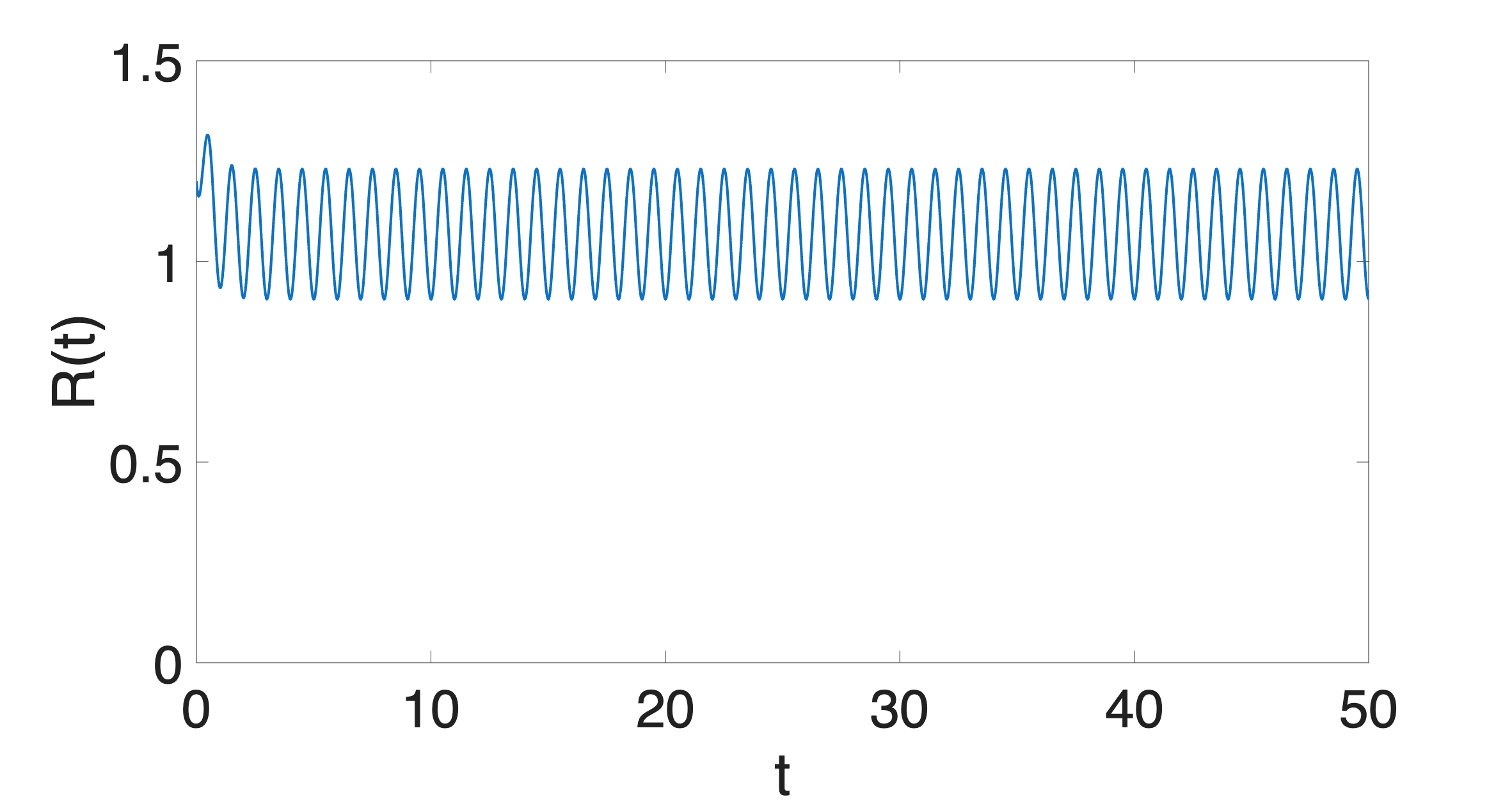}
\caption{$\phi_0=5$, $\psi_0=3$, and $R_0=1.2$.}
\label{fig:persist1}
\end{subfigure}
\hfill
\begin{subfigure}{.48\textwidth}
\centering
\includegraphics[width=\linewidth]{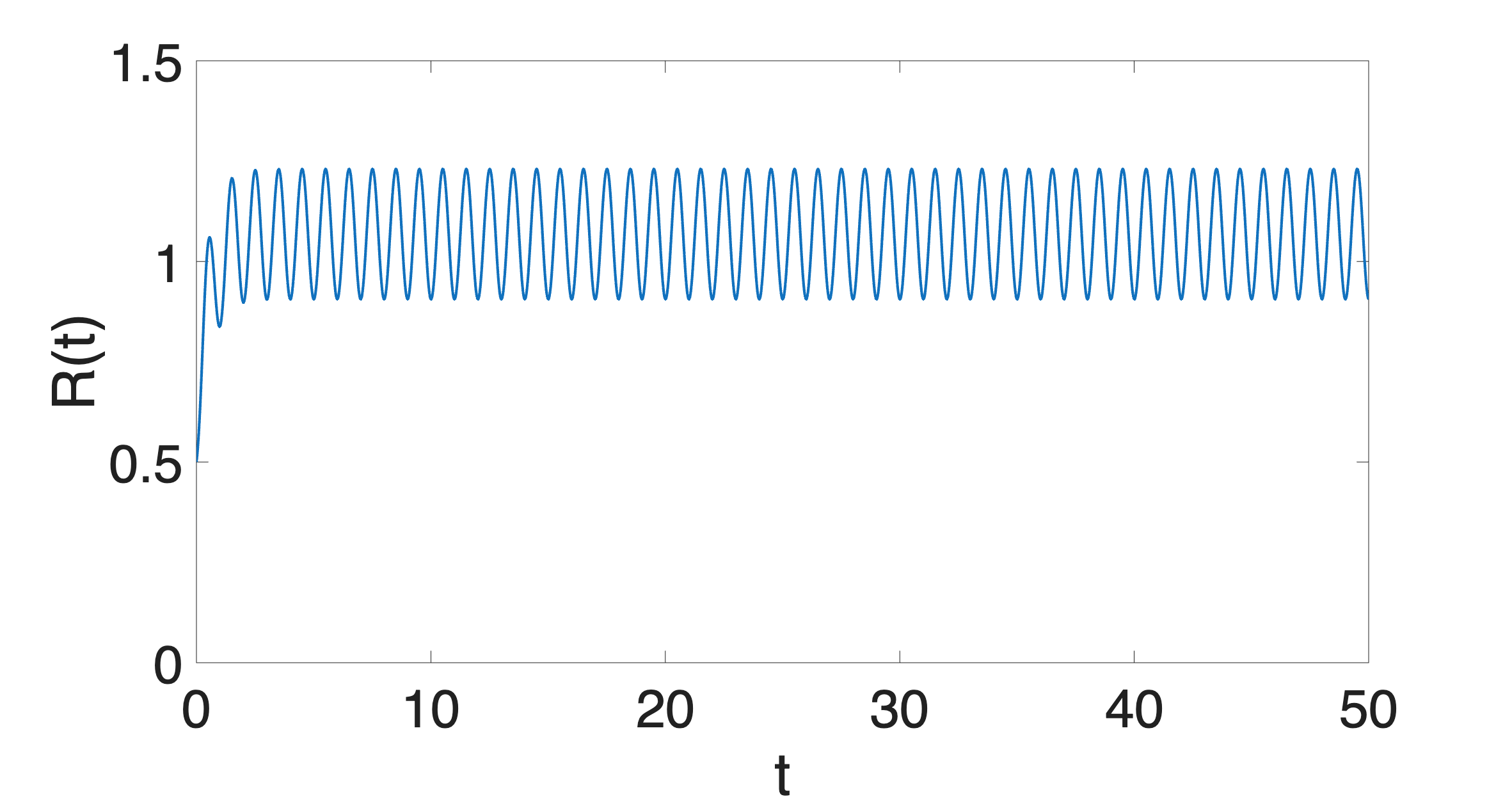}
\caption{$\phi_0=5$, $\psi_0=3$, and $R_0=0.5$.}
\label{fig:persist2}
\end{subfigure}
\caption{Persistence dynamics of the tumor radius when $\bar{\phi}> \bar{\psi}$.}
\label{fig:persistence}
\end{figure}

\subsection{Linear stability}
To illustrate the linear stability result, we take
\[
a_\phi=0.2,\qquad a_\psi=0.3,\qquad \phi_0=5,\qquad \psi_0=3, \qquad \theta=\frac{\pi}{12},\qquad
T=1,\qquad \alpha=1.
\]
By \eqref{num:phi} and \eqref{num:psi}, we have
\[
\phi_* = \phi_0(1-a_\phi) = 4.0,\qquad \psi^*=\psi_0(1+a_\psi) =3.9.
\]
Thus, the stronger condition $\phi_* > \psi^*$ is satisfied. Moreover,
\[
\phi^*=\phi_0(1+a_\phi)=6.0,\qquad
\psi_*=\psi_0(1-a_\psi)=2.1.
\]
Therefore
\[
R_+ = f_0^{-1}\left(\frac{\psi_*}{2\phi^*}\right)
      = f_0^{-1}\left(\frac{2.1}{12}\right)
      \approx 2.4707.
\]
By Remark \ref{rm:bound}, the radially symmetric $T$-periodic solution is linearly stable whenever
\[
0<\mu<\frac{12}{\alpha\bar\phi R_+^4}\approx 0.0644.
\]

For a fixed value of $\mu$, we solve the initial value problem \eqref{sys1} -- \eqref{sys2} with initial value $R_0=1.2$ on the time interval $[0,100]$. Since the solution converges to the positive periodic solution, we
approximate $R_*(t)$ by the solution over the last period, i.e.,
\[
R_*(t) \approx R(100-T+t) \qquad t\in[0,T].
\]
We then compute
\[
\Gamma_n(\mu) = \int_0^T A_n(t,\mu,R_*(t)) \dif t
\]
for the first several Fourier modes. According to Lemma \ref{lem:periodic}, the $n$-th mode decays if $\Gamma_n(\mu)>0$ and grows if $\Gamma_n(\mu)<0$. For computational purposes, we use \eqref{bessel5} in \eqref{An}, together with the periodicity identity \eqref{int-Rstar}, we write
{\small \begin{equation*}
    \begin{split}
        \Gamma_n(\mu) =& \mu \int_0^T \frac{\alpha \phi(t)}{I_1(R_*(t))+\alpha I_0(R_*(t))}\left(\frac{I_{n+1}(R_*(t))\Big(I_2(R_*(t))+\big(\frac{1}{R_*(t)}+\alpha\big) I_1(R_*(t))\Big)}{I_{n+1}(R_*(t))+ \big(\frac{n}{R_*(t)}+\alpha\big) I_n(R_*(t))}- I_2(R_*(t)) \right) \dif t\\
        &+  \int_0^T \frac{n(n^2-1)}{R_*^3(t)}\dif t.
    \end{split}
\end{equation*}}

When $\mu=0.05 < 0.0644$, the sufficient condition in Remark \ref{rm:bound} guarantees the linear stability of the periodic solution. The computed values
of $\Gamma_n(\mu)$ for $0\le n\le 13$ are listed in Table \ref{table1}.

\begin{table}[H]
\centering
\begin{tabular}{ |c|c|c|c|c|c|c|c| } 
 \hline
 mode $n$ & 0 & 1 & 2 & 3 & 4 & 5 & 6   \\ 
 \hline
 $\Gamma_n(\mu)$ & 0.0356 & 0 & 4.9301 & 19.7474 & 49.3883 & 98.7921 & 172.8987  \\
 \hline
 mode $n$ & 7 & 8 & 9 & 10 & 11 & 12 & 13 \\
 \hline
 $\Gamma_n(\mu)$ & 276.6486 & 414.9819 & 592.8393 & 815.1612 & 1.0869e+03 & 1.4130e+03 & 1.7983e+03 \\
 \hline
\end{tabular}
\caption{Computed values of $\Gamma_n(\mu)$ for $\mu=0.05$.}
\label{table1}
\end{table}

The value $\Gamma_1(\mu)=0$ corresponds to the translation mode. All computed non-translation modes satisfy $\Gamma_n(\mu)>0$, which is
consistent with the linear stability predicted by the sufficient condition.

We also consider a much larger value of the tumor aggressiveness parameter,
namely $\mu=30$. The computed values of $\Gamma_n(\mu)$ for $0\le n\le 13$
are listed in Table \ref{table2}.

\begin{table}[H]
\centering
\begin{tabular}{ |c|c|c|c|c|c|c|c| } 
 \hline
 mode $n$ & 0 & 1 & 2 & 3 & 4 & 5 & 6   \\ 
 \hline
 $\Gamma_n(\mu)$ & 21.1061 & 1.5545e-16 & -0.8156 & 12.8696 & 44.0274 & 97.3292 & 177.9004  \\
 \hline
 mode $n$ & 7 & 8 & 9 & 10 & 11 & 12 & 13 \\
 \hline
 $\Gamma_n(\mu)$ & 291.0200 & 442.0283 & 636.2931 & 879.1958 & 1.1761e+03 & 1.5325e+03 & 1.9536e+03 \\
 \hline
\end{tabular}
\caption{Computed values of $\Gamma_n(\mu)$ for $\mu=30$.}
\label{table2}
\end{table}

The value of $\Gamma_1(\mu)$ is approximately zero; the small nonzero value is
due to numerical error. Since $\Gamma_2(\mu)<0$, the second Fourier mode grows
exponentially by Lemma \ref{lem:periodic}. Hence, the radially symmetric
periodic solution is unstable when $\mu=30$.

Biologically, the parameter $\mu$ measures the intensity of tumor expansion due
to cell proliferation, and therefore may be interpreted as a tumor
aggressiveness parameter. When $\mu$ is small, the radially symmetric
$T$-periodic solution is linearly stable, which means that small perturbations
of the tumor boundary decay in time, and the tumor maintains an approximately spherical periodic growth
pattern. When $\mu$ is large, however, some nonradial modes may grow, and the
radially symmetric $T$-periodic solution loses linear stability. This indicates
the onset of shape instability, suggesting that a sufficiently aggressive tumor
may develop non-spherical boundary deformations. Similar relationships between
the loss of linear stability and the tumor aggressiveness parameter $\mu$ have
also been observed in other free boundary tumor models
\cite{bazaliy2003global,friedman2006asymptotic,friedman2006bifurcation}.

\bigskip







\appendix
\section{Properties of Bessel Functions}\label{app1}

Recall that the modified Bessel function of the first kind $I_n(x)$ 
satisfies the differential equation
\begin{equation}\label{bessel1}
    I''_n(x) + \frac{1}{x}I'_n(x) - \bigg(1+\frac{n^2}{x^2}\bigg) I_n(x) = 0\hspace{2em}x>0,\; n\ge 0,
\end{equation}
and is given by
\begin{equation}
    \label{bessel2}
    I_n(x) = \bigg(\frac{x}{2}\bigg)^n \sum_{k=0}^\infty \frac{1}{k!\Gamma(n+k+1)}\bigg(\frac{x}{2}\bigg)^{2k},
\end{equation}
from which it is easy to derive
\begin{equation}
    I_n(x) > 0 \;\text{ and }\; I_n'(x) > 0  \hspace{2em} \mbox{for }x>0, \; n\ge 0,
\end{equation}
\begin{equation}
    \label{besseln}
\frac{I_{n+1}(x)}{I_n(x)} < \frac{x}{2n} \hspace{2em} \mbox{for } x>0, \; n\ge 1,
\end{equation}
Furthermore, $I_n(\xi)$ satisfies the following properties:
\begin{gather}
I'_n(x) + \frac{n}{x}I_n(x) = I_{n-1}(x) \hspace{2em} n\ge 1, \label{bessel3}\\
I'_n(x) - \frac{n}{x}I_n(x) = I_{n+1}(x) \hspace{2em} n\ge 0, \label{bessel4}\\
x^{n+1} I_n(x) = \frac{\p}{\p x}(x^{n+1} I_{n+1}(x)) \hspace{2em} n\ge 0,\label{bessel5}\\
I_{n-1}(x) - I_{n+1}(x) = \frac{2n}{x}I_n(x) \hspace{2em} n\ge 1, \label{bessel6}\\
\frac{I_n(x)}{x} \text{ is increasing in $x$ for } x>0 \hspace{2em} n\ge 1.\label{bessel7}
\end{gather}

\section{Inequalities of Bessel Functions}
\begin{lem}\label{besselineq1}
    For $n \ge 1$,
    \[
    I_n^2(x) > I_{n+1}(x) I_{n-1}(x) \qquad \mbox{for }x>0.
    \]
\end{lem}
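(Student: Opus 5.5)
We will prove the Turán-type inequality $I_n^2(x) > I_{n+1}(x)I_{n-1}(x)$ directly from the power series \eqref{bessel2}, by expanding both sides as power series in $x$ and comparing coefficients. Write $I_n(x) = \sum_{k\ge0} a_{n,k}(x/2)^{n+2k}$ with $a_{n,k} = \frac{1}{k!\,(n+k)!}$. Both $I_n^2$ and $I_{n+1}I_{n-1}$ are then power series in $(x/2)$ whose only nonzero terms have exponent $2n+2m$, $m\ge0$. Their coefficients are
\[
c_m := \sum_{j+k=m} \frac{1}{j!\,k!\,(n+j)!\,(n+k)!}, \qquad d_m := \sum_{j+k=m} \frac{1}{j!\,k!\,(n+1+j)!\,(n-1+k)!}.
\]
It suffices to show $c_m \ge d_m$ for every $m$, with strict inequality for at least one $m$ (for instance $m=0$). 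Since $x>0$, all powers $(x/2)^{2n+2m}$ are positive, and the strict inequality follows.

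For $m=0$ we have $c_0 = \frac{1}{(n!)^2}$ and $d_0 = \frac{1}{(n+1)!\,(n-1)!} = \frac{1}{(n!)^2}\cdot\frac{n}{n+1}$, so $c_0 > d_0$. For general $m$, the cleanest route is a closed form of each Cauchy product. Using the known product formula
\[
I_\mu(x)I_\nu(x) = \sum_{m\ge0} \frac{(\mu+\nu+2m)!\,(x/2)^{\mu+\nu+2m}}{m!\,(\mu+m)!\,(\nu+m)!\,(\mu+\nu+m)!},
\]
which follows from the Vandermonde-type identity $\sum_{j+k=m}\binom{m}{j}\frac{1}{(\mu+j)!(\nu+k)!} = \frac{(\mu+\nu+2m)!}{(\mu+m)!\,(\nu+m)!\,(\mu+\nu+m)!}$ (the coefficient of $t^{\mu+\nu+2m}$ in $(1+t)^{\mu+m}(1+t)^{\nu+m}$, after rescaling), we obtain
\[
\frac{c_m}{d_m} = \frac{(n+1+m)!\,(n-1+m)!}{\big((n+m)!\big)^2} = \frac{n+m+1}{n+m} > 1.
\]
Thus $c_m > d_m$ for every $m\ge0$, which gives the lemma.

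The main obstacle is verifying this binomial convolution identity carefully, since index shifts are easy to get wrong. I would check it by writing $\frac{1}{(\mu+j)!(\nu+k)!} = \frac{1}{(\mu+\nu+2m)!}\binom{\mu+\nu+2m}{\mu+j}$ after multiplying by suitable factorials, then applying Vandermonde's identity $\sum_j \binom{\mu+m}{m-j}\binom{\nu+m}{\nu+\ldots}$. If this becomes messy, I have a fallback. Set $g(x) = I_n^2 - I_{n+1}I_{n-1}$ and use the recurrences \eqref{bessel3}–\eqref{bessel4} to express $(x\,g)'$ or a Wronskian-type derivative in terms of positive quantities. Combined with $g(0^+)\sim (x/2)^{2n}\big(\frac{1}{(n!)^2}-\frac{1}{(n+1)!(n-1)!}\big) > 0$, a sign argument would then finish the proof.
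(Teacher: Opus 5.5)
Your proposal is correct and is the paper's proof: both expand $I_n^2$ and $I_{n+1}I_{n-1}$ using the closed-form Cauchy product of the Bessel series and compare coefficients term by term, and your ratio $\frac{n+m+1}{n+m}>1$ is exactly the paper's inequality $\frac{1}{\Gamma(n+k+1)^2}>\frac{1}{\Gamma(n+k)\Gamma(n+k+2)}$. (One small slip in your sketch of the convolution identity: the correct normalization is to multiply by $(\mu+\nu+m)!$, which gives $\sum_j\binom{m}{j}\binom{\mu+\nu+m}{\mu+j}=\binom{\mu+\nu+2m}{\mu+m}$ by Vandermonde, rather than to read off a coefficient of $(1+t)^{\mu+m}(1+t)^{\nu+m}$.)
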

\begin{proof}
    Using the series representation of the modified Bessel function \eqref{bessel2}, we have
    \begin{equation*}
    \begin{split}
        I_n^2(x) &= \bigg(\sum_{i=0}^\infty \frac{\big(\frac{x}{2}\big)^{2i+n}}{i! \Gamma(n+i+1)} \bigg)\bigg(\sum_{j=0}^\infty \frac{\big(\frac{x}{2}\big)^{2j+n}}{j! \Gamma(n+j+1)} \bigg) \\
        &= \sum_{k=0}^\infty \frac{\Gamma(2n+2k+1)}{k! \Gamma(n+k+1)^2 \Gamma(2n+k+1)}\Big(\frac{x}{2}\Big)^{2k+2n},
        \end{split}
    \end{equation*}
    and
    \begin{equation*}
    \begin{split}
        I_{n+1}(x)I_{n-1}(x) &= \bigg(\sum_{i=0}^\infty \frac{\big(\frac{x}{2}\big)^{2i+n+1}}{i! \Gamma(n+i+2)} \bigg)\bigg(\sum_{j=0}^\infty \frac{\big(\frac{x}{2}\big)^{2j+n-1}}{j! \Gamma(n+j)} \bigg) \\
        &= \sum_{k=0}^\infty \frac{\Gamma(2n+2k+1)}{k! \Gamma(n+k)\Gamma(n+k+2)\Gamma(2n+k+1)}\Big(\frac{x}{2}\Big)^{2k+2n},
        \end{split}
    \end{equation*}
    Thus,
    \begin{equation*}
        \begin{split}
            &I_n^2(x) - I_{n+1}(x)I_{n-1}(x)\\
            &\qquad = \sum_{k=0}^\infty \frac{\Gamma(2n+2k+1)}{k! \Gamma(2n+k+1)} \left( \frac{1}{\Gamma(n+k+1)^2} - \frac{1}{\Gamma(n+k)\Gamma(n+k+2)}\right)\Big(\frac{x}{2}\Big)^{2k+2n}.
        \end{split}
    \end{equation*}
    To prove $I_n^2(x)-I_{n+1}(x)I_{n-1}(x) >0$, we only need to show that each coefficient is positive.  Therefore, it is sufficient to prove
    \[
    \frac{1}{\Gamma(n+k+1)^2} - \frac{1}{\Gamma(n+k)\Gamma(n+k+2)} >0.
    \]
    Using
\[
\Gamma(n+k+1)=(n+k)\Gamma(n+k),
\]
and
\[
\Gamma(n+k+2)=(n+k+1)\Gamma(n+k+1),
\]
we find
\begin{equation*}
    \begin{split}
        \frac{1}{\Gamma(n+k+1)^2} - \frac{1}{\Gamma(n+k)\Gamma(n+k+2)} = \frac{1}{\Gamma(n+k+1)\Gamma(n+k)}\left(\frac{1}{n+k} - \frac{1}{n+k+1}\right) > 0.
    \end{split}
\end{equation*}
Therefore, the proof is complete.
\end{proof}

\begin{lem}\label{besselineq2}
    For $n \ge 1$ and $\alpha >0$,
    \[
    \frac{I_{n+1}(x)}{I_n'(x)+\alpha I_n(x)} < \frac{I_n(x)}{I_{n-1}'(x)+ \alpha I_{n-1}(x)} \qquad \mbox{for }x>0.
    \]
\end{lem}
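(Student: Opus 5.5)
The plan is to cross-multiply and reduce everything to Lemma \ref{besselineq1} together with the recurrence relations \eqref{bessel3}--\eqref{bessel4}.

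\textbf{Step 1: clear denominators.} For $x>0$ and $n\ge 1$, both $I_n'(x)+\alpha I_n(x)$ and $I_{n-1}'(x)+\alpha I_{n-1}(x)$ are positive. This follows from the positivity of $I_m$ and $I_m'$ stated in Appendix \ref{app1} and from $\alpha>0$. So the claimed inequality is equivalent to
\[
I_{n+1}(x)\bigl(I_{n-1}'(x)+\alpha I_{n-1}(x)\bigr) < I_n(x)\bigl(I_n'(x)+\alpha I_n(x)\bigr).
\]
I split this into an $\alpha$-part and an $\alpha$-free part. The $\alpha$-part is
\[
\alpha\bigl(I_{n+1}(x)I_{n-1}(x)-I_n^2(x)\bigr).
\]
It is strictly negative by Lemma \ref{besselineq1}. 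It therefore suffices to prove
\[
I_n(x)I_n'(x)-I_{n+1}(x)I_{n-1}'(x) > 0 .
\]

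\textbf{Step 2: eliminate derivatives.} Apply \eqref{bessel4} with index $n$ to get $I_n'=I_{n+1}+\frac{n}{x}I_n$. Apply \eqref{bessel4} with index $n-1\ge 0$ to get $I_{n-1}'=I_n+\frac{n-1}{x}I_{n-1}$; for $n=1$ this reduces to $I_0'=I_1$, so no separate case is needed. Substituting, the cross terms $I_nI_{n+1}$ cancel, and we obtain
\[
I_nI_n'-I_{n+1}I_{n-1}' = \frac1x\Bigl(nI_n^2-(n-1)I_{n+1}I_{n-1}\Bigr).
\]

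\textbf{Step 3: conclude.} Since $n-1\ge 0$, Lemma \ref{besselineq1} gives $(n-1)I_{n+1}I_{n-1}\le (n-1)I_n^2$. Hence the bracket is at least $I_n^2(x)>0$, and the $\alpha$-free part is positive.

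Adding the two parts gives the strict inequality. The only delicate point is choosing the right form of the recurrences in Step 2 so that the mixed terms cancel. Lemma \ref{besselineq1} then controls both the $\alpha$-part and the $\alpha$-free part.
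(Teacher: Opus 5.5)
Your proposal is correct and follows essentially the paper's argument: cross-multiply using positivity of the denominators, then rewrite $I_n'$ and $I_{n-1}'$ via \eqref{bessel4} so that the $I_nI_{n+1}$ terms cancel, and finish with the Tur\'an-type inequality of Lemma \ref{besselineq1}. The only difference is bookkeeping: you split off the $\alpha$-terms and bound the two parts separately, whereas the paper compares $\bigl(\frac{n-1}{x}+\alpha\bigr)I_{n+1}I_{n-1}$ with $\bigl(\frac{n}{x}+\alpha\bigr)I_n^2$ in a single chain.
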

\begin{proof}
    Since $I_n(x)>0$ and $I_n'(x)>0$ for $x>0$, all denominators are positive.
    Thus it suffices to prove
    \[
    I_{n+1}(x)\Big(I_{n-1}'(x) + \alpha I_{n-1}(x)\Big) < I_n(x) \Big(I_n'(x) + \alpha I_n(x)\Big).
    \]
    By \eqref{bessel4}, this inequality is equivalent to
    \[
    I_{n+1}(x)\Big(I_n(x) + \frac{n-1}{x}I_{n-1}(x) + \alpha I_{n-1}(x) \Big) < I_n(x) \Big( I_{n+1}(x) + \frac{n}{x}I_n(x) + \alpha I_n(x)\Big).
    \]
    After canceling the common term $I_{n+1}(x) I_n(x)$, it remains to prove
    \[
    \Big(\frac{n-1}{x}+\alpha\Big) I_{n+1}(x) I_{n-1}(x) < \Big(\frac{n}{x} + \alpha \Big) I_n^2(x).
    \]
    By Lemma \ref{besselineq1}, we have
    \[
    \Big(\frac{n}{x} + \alpha \Big) I_n^2(x) > \Big(\frac{n}{x} + \alpha \Big) I_{n+1}(x)I_{n-1}(x) > \Big(\frac{n-1}{x}+\alpha\Big) I_{n+1}(x) I_{n-1}(x),
    \]
    and the desired inequality follows.
\end{proof}

    






\bigskip
\begin{thebibliography}{10}

\bibitem{bai2013qualitative}
M.~Bai and S.~Xu.
\newblock Qualitative analysis of a mathematical model for tumor growth with a periodic supply of external nutrients.
\newblock {\em Pacific Journal of Applied Mathematics}, 5(4):217, 2013.

\bibitem{bazaliy2003global}
B.~Bazaliy and A.~Friedman.
\newblock Global existence and asymptotic stability for an elliptic-parabolic free boundary problem: an application to a model of tumor growth.
\newblock {\em Indiana University Mathematics Journal}, pages 1265--1304, 2003.

\bibitem{bazaliy2003free}
B.~V. Bazaliy and A.~Friedman.
\newblock A free boundary problem for an elliptic-parabolic system: application to a model of tumor growth.
\newblock {\em Communications in Partial Differential Equations}, 28(3-4):517--560, 2003.

\bibitem{cristini2003nonlinear}
V.~Cristini, J.~Lowengrub, and Q.~Nie.
\newblock Nonlinear simulation of tumor growth.
\newblock {\em Journal of Mathematical Biology}, 46(3):191--224, 2003.

\bibitem{cui2007bifurcation}
S.~Cui and J.~Escher.
\newblock Bifurcation analysis of an elliptic free boundary problem modelling the growth of avascular tumors.
\newblock {\em SIAM Journal on Mathematical Analysis}, 39(1):210--235, 2007.

\bibitem{cui2009well}
S.~Cui and J.~Escher.
\newblock Well-posedness and stability of a multi-dimensional tumor growth model.
\newblock {\em Archive for Rational Mechanics and Analysis}, 191:173--193, 2009.

\bibitem{cui2001analysis}
S.~Cui and A.~Friedman.
\newblock Analysis of a mathematical model of the growth of necrotic tumors.
\newblock {\em Journal of Mathematical Analysis and Applications}, 255(2):636--677, 2001.

\bibitem{FFH1}
M.~Fontelos, A.~Friedman, and B.~Hu.
\newblock Mathematical analysis of a model for the initiation of angiogenesis.
\newblock {\em SIAM Journal on Mathematical Analysis}, 33:1330--1355, 2002.

\bibitem{fontelos2003symmetry}
M.~A. Fontelos and A.~Friedman.
\newblock Symmetry-breaking bifurcations of free boundary problems in three dimensions.
\newblock {\em Asymptotic Analysis}, 35(3-4):187--206, 2003.

\bibitem{friedman2007mathematical}
A.~Friedman.
\newblock Mathematical analysis and challenges arising from models of tumor growth.
\newblock {\em Mathematical Models and Methods in Applied Sciences}, 17(supp01):1751--1772, 2007.

\bibitem{friedman2006asymptotic}
A.~Friedman and B.~Hu.
\newblock Asymptotic stability for a free boundary problem arising in a tumor model.
\newblock {\em Journal of Differential Equations}, 227(2):598--639, 2006.

\bibitem{friedman2006bifurcation}
A.~Friedman and B.~Hu.
\newblock Bifurcation from stability to instability for a free boundary problem arising in a tumor model.
\newblock {\em Archive for Rational Mechanics and Analysis}, 180:293--330, 2006.

\bibitem{friedman2008stability}
A.~Friedman and B.~Hu.
\newblock Stability and instability of liapunov-schmidt and hopf bifurcation for a free boundary problem arising in a tumor model.
\newblock {\em Transactions of the American Mathematical Society}, 360(10):5291--5342, 2008.

\bibitem{angio2}
A.~Friedman and K.~Y. Lam.
\newblock Analysis of a free-boundary tumor model with angiogenesis.
\newblock {\em Journal of Differential Equations}, 259:7636--7661, 2015.

\bibitem{friedman1999analysis}
A.~Friedman and F.~Reitich.
\newblock Analysis of a mathematical model for the growth of tumors.
\newblock {\em Journal of Mathematical Biology}, 38:262--284, 1999.

\bibitem{friedman2001symmetry}
A.~Friedman and F.~Reitich.
\newblock Symmetry-breaking bifurcation of analytic solutions to free boundary problems: an application to a model of tumor growth.
\newblock {\em Transactions of the American Mathematical Society}, 353(4):1587--1634, 2001.

\bibitem{friedman2001nonlinear}
Avner Friedman and Fernando Reitich.
\newblock Nonlinear stability of a quasi-static {Stefan} problem with surface tension: a continuation approach.
\newblock {\em Annali della Scuola Normale Superiore di Pisa-Classe di Scienze}, 30(2):341--403, 2001.

\bibitem{hao2012bifurcation}
W.~Hao, J.~D. Hauenstein, B.~Hu, Y.~Liu, A.~J. Sommese, and Y.~T. Zhang.
\newblock Bifurcation for a free boundary problem modeling the growth of a tumor with a necrotic core.
\newblock {\em Nonlinear Analysis: Real World Applications}, 13(2):694--709, 2012.

\bibitem{he2021existence}
W.~He and R.~Xing.
\newblock The existence and linear stability of periodic solution for a free boundary problem modeling tumor growth with a periodic supply of external nutrients.
\newblock {\em Nonlinear Analysis: Real World Applications}, 60:103290, 2021.

\bibitem{he2022linear}
W.~He, R.~Xing, and B.~Hu.
\newblock Linear stability analysis for a free boundary problem modeling multilayer tumor growth with time delay.
\newblock {\em SIAM Journal on Mathematical Analysis}, 54(4):4238--4276, 2022.

\bibitem{he2022thelinear}
W.~He, R.~Xing, and B.~Hu.
\newblock The linear stability for a free boundary problem modeling multilayer tumor growth with time delay.
\newblock {\em Mathematical Methods in the Applied Sciences}, 45(11):7096--7118, 2022.

\bibitem{huang2024periodic}
Y.~Huang and B.~Hu.
\newblock Periodic solution for a free-boundary tumor model with small diffusion-to-growth ratio.
\newblock {\em Journal of Differential Equations}, 399:252--280, 2024.

\bibitem{angio1}
Y.~Huang, Z.~Zhang, and B.~Hu.
\newblock Bifurcation for a free-boundary tumor model with angiogenesis.
\newblock {\em Nonlinear Analysis: Real World Applications}, 35:483--502, 2017.

\bibitem{huang2021asymptotic}
Y.~Huang, Z.~Zhang, and B.~Hu.
\newblock Asymptotic stability for a free boundary tumor model with angiogenesis.
\newblock {\em Journal of Differential Equations}, 270:961--993, 2021.

\bibitem{liu2025periodic}
J.~Liu and B.~Hu.
\newblock Periodic solution for a free-boundary vascular tumor model.
\newblock {\em Discrete and Continuous Dynamical Systems-B}, 30(12):5073--5093, 2025.

\bibitem{lowengrub2}
J.~S. Lowengrub, H.~B. Frieboes, F.~Jim, Y.~L. Chuang, X.~Li, P.~Macklin, S.~M. Wise, and V.~Cristini.
\newblock Nonlinear modelling of cancer: bridging the gap between cells and tumours.
\newblock {\em Nonlinearity}, 23:1--91, 2010.

\bibitem{spill2015mesoscopic}
F.~Spill, P.~Guerrero, T.~Alarcon, P.~K. Maini, and H.~M. Byrne.
\newblock Mesoscopic and continuum modelling of angiogenesis.
\newblock {\em Journal of Mathematical Biology}, 70(3):485--532, 2015.

\bibitem{wu2025optimal}
Y.~Wu, X.~E. Zhao, R.~Leander, and W.~Ding.
\newblock Optimal control for a free boundary tumor growth model.
\newblock {\em Evolution Equations and Control Theory}, 14(6):1534--1564, 2025.

\bibitem{zhao2025analysis}
X.~E. Zhao.
\newblock Analysis and optimization of tumor inhibitor treatments in a free boundary tumor growth model.
\newblock {\em Nonlinear Analysis: Real World Applications}, 86:104406, 2025.

\bibitem{zhao2020impact}
X.~E. Zhao and B.~Hu.
\newblock The impact of time delay in a tumor model.
\newblock {\em Nonlinear Analysis: Real World Applications}, 51:103015, 2020.

\bibitem{zhao2020symmetry}
X.~E. Zhao and B.~Hu.
\newblock Symmetry-breaking bifurcation for a free-boundary tumor model with time delay.
\newblock {\em Journal of Differential Equations}, 269(3):1829--1862, 2020.

\bibitem{zhao2025determination}
X.~E. Zhao and J.~Shi.
\newblock On determination of the bifurcation type for a free boundary problem modeling tumor growth.
\newblock {\em Journal of Differential Equations}, 436:113352, 2025.

\bibitem{zhao2024optimal}
X.~E. Zhao, Y.~Wu, R.~Leander, W.~Ding, and S.~Lenhart.
\newblock Optimal control of treatment in a free boundary problem modeling multilayered tumor growth.
\newblock {\em arXiv preprint arXiv:2410.14114}, 2024.

\end{thebibliography}
\end{document}